\theoremstyle{plain}
	\newtheorem{theorem}{Theorem}[section]
	\newtheorem{lemma}[theorem]{Lemma}
	\newtheorem{corollary}[theorem]{Corollary}
	\newtheorem{proposition}[theorem]{Proposition}
	\newtheorem{question}[theorem]{Question}
\theoremstyle{definition} 
	\newtheorem{definition}[theorem]{Definition}
\DeclareMathOperator{\Nm}{Nm}
\def\jac#1#2{\left(\frac{#1}{#2}\right)}
\def\D{\mathrm{D}}
\def\id{\mathrm{id}}
\def\sgn{\mathop{\mathrm{sgn}}}
\def\Cl{\mathop{\mathrm{Cl}}}
\begin{document}
\title{Quadratic residues and domino tilings}
\author[Y. Kamio]{Yuhi Kamio}
\address{College of Arts and Sciences, University of Tokyo, 3-8-1 Komaba, Meguro-ku, Tokyo 153-8914, Japan}
\email{emirp13@g.ecc.u-tokyo.ac.jp}
\author[J. Koizumi]{Junnosuke Koizumi}
\address{Graduate School of Mathematical Sciences, University of Tokyo, 3-8-1 Komaba, Meguro-ku, Tokyo 153-8914, Japan}
\email{jkoizumi@ms.u-tokyo.ac.jp}
\author[T. Nakazawa]{Toshihiko Nakazawa}
\address{KADOKAWA DWANGO Educational Institute, Kabuki-za Tower 14F, 4-12-15 Ginza, Chuo-ku, Tokyo 104-0061, Japan}
\email{toshihiko\_nakazawa@nnn.ac.jp}

\date{\today}
\thanks{}
\subjclass{}

\begin{abstract}
    The formula for the number of domino tilings due to Kasteleyn and Temperley-Fisher is strikingly similar to Eisenstein's formula for the Legendre symbol.
    We study the connection between these two concepts and prove a formula which expresses the Jacobi symbol in terms of domino tilings.
\end{abstract}

\maketitle
\setcounter{tocdepth}{1}
\tableofcontents

\enlargethispage*{20pt}
\thispagestyle{empty}

\section{Introduction}

In 1848, Eisenstein \cite{Eisenstein1848} presented a proof of the quadratic, cubic, and quartic reciprocity laws using analytic functions.
The idea of his proof of the quadratic reciprocity was to express the Legendre symbol using trigonometric functions.
For odd prime numbers $p$ and $q$, his formula can be written as follows:
$$
    \jac{q}{p}=4^{\frac{p-1}{2}\frac{q-1}{2}}\prod_{j=1}^{\frac{p-1}{2}}\prod_{k=1}^{\frac{q-1}{2}}\biggl(\cos^2 \dfrac{2\pi j}{p} - \cos^2 \dfrac{2\pi k}{q}\biggr).
$$
About 100 years later, in 1961, Kasteleyn \cite{Kasteleyn_domino} and Temperley-Fisher \cite{Temperley_Fisher_domino} independently discovered a formula for the number of domino tilings of a rectangular domain.
For odd numbers $m$ and $n$, the number of domino tilings of a rectangle with width $m-1$ and height $n-1$ is given by the following formula:
$$
    k_{m-1,n-1}=4^{\frac{m-1}{2}\frac{n-1}{2}}\prod_{j=1}^{\frac{m-1}{2}}\prod_{k=1}^{\frac{n-1}{2}}\biggl(\cos^2 \dfrac{2\pi j}{m} + \cos^2 \dfrac{2\pi k}{n}\biggr).
$$
These formulas are strikingly similar, yet there seems to be no literature that addresses this similarity.
The purpose of this paper is to reveal the hidden connection between quadratic residues and domino tilings.

For non-negative integers $m,n$, we write $R_{m,n}$ for the rectangle with width $m$ and height $n$, and $\D(R_{m,n})$ for the set of all domino tilings of $R_{m,n}$.
For $D\in \D(R_{m,n})$, we write $h(D)$ for the number of horizontal dominoes.
Our main theorem is the following.

\begin{theorem}\label{main}
    Let $m,n$ be positive integers and assume that $n$ is odd.
    Then we have
    \begin{align}\label{eq:main}
        \sum_{D\in \D(R_{m-1,n-1})} \sqrt{-1}^{h(D)}=
        \begin{cases}
            \jac{m}{n}&(m\equiv 1\pmod 2)\\
            \jac{m/2}{n}&(m\equiv 0\pmod 2).
        \end{cases}
    \end{align}
    Here, the right hand side is the Jacobi symbol.
\end{theorem}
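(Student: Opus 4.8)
The plan is to read the left-hand side as a weighted dimer partition function, evaluate it by the Kasteleyn method to obtain a trigonometric product, and then identify that product with the Jacobi symbol via a Jacobi-symbol strengthening of Eisenstein's formula. Before anything else I would record an elementary parity fact that makes the statement plausible and the sum real: since $n-1$ is even, $h(D)$ is even for every tiling $D$. Indeed, coloring the cell in row $y$ by $(-1)^y$, a vertical domino contributes $0$ and a horizontal domino in row $y$ contributes $2(-1)^y$; as the total over all cells vanishes (there are equally many rows of each parity), the horizontal dominoes in odd and even rows must balance, so $h(D)$ is even and $\sqrt{-1}^{h(D)}=(-1)^{h(D)/2}\in\{\pm1\}$.

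Next I would introduce the generating polynomial $Z(x)=\sum_{D}x^{h(D)}$, so that the left-hand side is $Z(\sqrt{-1})$, and compute $Z(x)$ by Kasteleyn's method: fix a Kasteleyn orientation of the grid graph on the cells of $R_{m-1,n-1}$, weight horizontal edges by $x$ and vertical edges by $1$, so that $\operatorname{Pf}(K_x)=\pm Z(x)$. Because the grid is a product of two paths, the relevant eigenvalues are the path-graph eigenvalues $2\cos\frac{j\pi}{m}$ and $2\cos\frac{k\pi}{n}$, and diagonalizing the Kasteleyn matrix yields, for odd $m$, a product of Kasteleyn--Temperley--Fisher shape but with $x^2$ attached to the horizontal factors:
\begin{align*}
Z(x)=4^{\frac{m-1}{2}\frac{n-1}{2}}\prod_{j=1}^{\frac{m-1}{2}}\prod_{k=1}^{\frac{n-1}{2}}\Bigl(x^2\cos^2\tfrac{2\pi j}{m}+\cos^2\tfrac{2\pi k}{n}\Bigr).
\end{align*}
(The appearance of $x^2$ rather than $x$ is exactly the evenness of $h(D)$ noted above.) Setting $x=\sqrt{-1}$ turns every $x^2$ into $-1$, converting the $+$ of Kasteleyn--Temperley--Fisher into the $-$ of Eisenstein:
\begin{align*}
\sum_{D\in\D(R_{m-1,n-1})}\sqrt{-1}^{h(D)}=4^{\frac{m-1}{2}\frac{n-1}{2}}\prod_{j=1}^{\frac{m-1}{2}}\prod_{k=1}^{\frac{n-1}{2}}\Bigl(\cos^2\tfrac{2\pi k}{n}-\cos^2\tfrac{2\pi j}{m}\Bigr),
\end{align*}
which is precisely the right-hand side of Eisenstein's formula with $(p,q)=(n,m)$. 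The overall sign of the Pfaffian I would pin down once and for all on a trivial instance such as $m=1$ or $m=2$.

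It then remains to prove the purely number-theoretic identity that this product equals $\jac{m}{n}$ for odd $m$ and $\jac{m/2}{n}$ for even $m$. When $\gcd(m,n)>1$ some factor vanishes (as $\tfrac{j}{m}=\tfrac{k}{n}$ becomes solvable), matching the vanishing of the symbol, so I may assume $\gcd(m,n)=1$. For coprime odd $m,n$ I would evaluate the inner product over $j$ in closed form using $\cos^2A-\cos^2B=\sin(A+B)\sin(A-B)$ together with classical product-of-sines identities such as $\prod_{j=0}^{m-1}\sin\bigl(\theta+\tfrac{j\pi}{m}\bigr)=2^{1-m}\sin(m\theta)$, which should collapse the double product into the one-dimensional Gauss-lemma product $\prod_{k=1}^{(n-1)/2}\frac{\sin(2\pi mk/n)}{\sin(2\pi k/n)}$, and then show the latter equals $\jac{m}{n}$.

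The hard part, I expect, is precisely this last evaluation for the \emph{Jacobi} symbol rather than the Legendre symbol: Eisenstein's formula and the sine-product form of Gauss's lemma are classically stated for primes, the trigonometric product is not visibly multiplicative in $m$ or $n$, and extending it to composite arguments requires a version of Gauss's lemma valid for Jacobi symbols (e.g.\ via counting least residues modulo $n$, or via Zolotarev-type permutation-sign arguments). A secondary but genuine subtlety is the even-$m$ case: there $m-1$ is odd, the index ranges in the eigenvalue product change, and an extra factor corresponding to $\cos^2\pi=1$ appears, contributing a product $\prod_k(-\sin^2\tfrac{2\pi k}{n})$ whose value I expect to supply exactly the factor $\jac{2}{n}$ that converts the odd-type formula into $\jac{m/2}{n}$; verifying this bookkeeping, together with the global sign of the Pfaffian, is where the care is required.
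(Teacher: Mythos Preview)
Your plan is correct and tracks the paper's algebraic proof (Section~3) closely: the paper also reduces to a Kasteleyn determinant, evaluates it as a product over roots of unity that collapses to $\prod_{k}\frac{\xi_n^{mk}-1}{\xi_n^k-1}$ (your sine-quotient product in disguise), and then invokes a Jacobi-symbol extension of Gauss's lemma, which it proves by induction on $n$. The only cosmetic differences are that the paper packages the eigenvalue computation as a norm in $A_m^+\otimes_{\mathbb{C}}B_n^+$ rather than writing the trigonometric product directly, and it pins down the global sign by computing $\sgn(\gamma\circ\varphi)$ on the totally vertical tiling (your suggestion of checking $m=1$ or $m=2$ would not suffice, since the Pfaffian sign may vary with $m,n$; the clean fix is to specialize $x=1$, where $Z(1)>0$); the even-$m$ factor $(-1)^{(n^2-1)/8}=\jac{2}{n}$ you anticipate is exactly what the paper finds.
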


In particular, this formula implies that the left hand side takes values in $\{-1,0,1\}$, which is already non-trivial.
Also, this formula immediately implies the quadratic reciprocity as follows.
For odd numbers $m$ and $n$, there is an obvious bijection $\rho\colon \D(R_{m-1,n-1})\xrightarrow{\sim} \D(R_{n-1,m-1})$ induced by transposition.
Since the total number of dominoes is $\frac{(m-1)(n-1)}{2}$, we have $h(\rho(D))=\frac{(m-1)(n-1)}{2}-h(D)$.
Therefore \Cref{eq:main} implies
$$
    \jac{m}{n}=(-1)^{\frac{m-1}{2}\frac{n-1}{2}}\jac{n}{m}.
$$

In this paper, we provide two different proofs of \Cref{main}.
The first one is an algebraic proof based on the method of Kasteleyn.
The second one is a completely elementary proof, which yields a new combinatorial proof of the quadratic reciprocity.

\begin{question}
    Is there a formula similar to \Cref{main} for higher residue symbols?
\end{question}

\subsection*{Acknowledgement}
The authors are grateful to Ryuya Hora, Kyosuke Higashida, Shuho Kanda, and Koto Imai for stimulating discussions at “Mathspace Topos”, and would like to extend our gratitude to its promotor Nobuo Kawakami for his support in establishing the “Mathspace Topos” and adviser Fumiharu Kato for fostering such an environment.
The authors would also like to thank Yugo Takanashi and Shin-ichiro Seki for helpful comments.

\section{Preliminaries}

\subsection{Domino tilings}

We define a \emph{board} to be a finite subset of $\mathbb{Z}\times \mathbb{Z}$.
For example, $R_{m,n}=[m]\times [n]$, where $[n]=\{1,2,\dots,n\}$, is a board.
An element of a board is called a \emph{square}.
A \emph{domino tiling} of a board $X$ is a set $D$ of subsets of $X$ such that
\begin{itemize}
    \item every element of $D$ consists of two adjacent squares in $X$, and
    \item every square of $X$ is contained in exactly one element of $D$.
\end{itemize}
We write $\D(X)$ for the set of all domino tilings of $X$.
Each element of $D\in \D(X)$ is called a \emph{domino}.
A domino is called \emph{horizontal} if it is of the form $\{(i,j),(i+1,j)\}$, and is called \emph{vertical} if it is of the form $\{(i,j),(i,j+1)\}$.
A domino tiling $D$ is called \emph{totally vertical} if $D$ does not contain a horizontal domino.

We draw $X$ as a collection of $1\times 1$ squares centered at elements of $X$, and $D$ as a collection of $1\times 2$ rectangles formed by two adjacent squares in $X$.
For example, the domino tiling
$$
    D=\bigl\{\{(1,1),(2,1)\},\{(1,2),(2,2)\},\{(3,1),(3,2)\}\bigr\}
$$
of $R_{3,2}$ can be drawn as follows.

\begin{center}
    \begin{tikzpicture}[scale=0.5]
    % 下の横長の長方形
    \draw (0,0) rectangle (2,1);
    % 上の横長の長方形
    \draw (0,1) rectangle (2,2);
    % 右の縦長の長方形
    \draw (2,0) rectangle (3,2);
    \node at (0.5,-0.5) {$1$};
    \node at (1.5,-0.5) {$2$};
    \node at (2.5,-0.5) {$3$};
    \node at (-0.5,0.5) {$1$};
    \node at (-0.5,1.5) {$2$};
    \end{tikzpicture}
\end{center}

For a domino tiling having a $2\times 2$ square covered by two dominoes, we can rotate the square by $90^\circ$ to obtain another domino tiling.
This procedure is called a \emph{flip}.

\begin{center}
    \begin{tikzpicture}[scale=0.5]
        \draw (0,0) -- (0,1) -- (2,1) -- (2,0) -- (0,0) -- (0,-1) -- (2,-1) -- (2,0);
        \node at (3,0) {$\longleftrightarrow$};
        \draw (5,1) -- (4,1) -- (4,-1) -- (5,-1) -- (5,1) -- (6,1) -- (6,-1) -- (5,-1);
    \end{tikzpicture}
\end{center}
The following lemma is well-known, but we provide a proof for the convenience of the reader.

\begin{lemma}\label{flip_transitivity}
    Let $m,n$ be poisitive integers with $n$ even.
    Then any domino tiling of $R_{m,n}$ can be obtained from the totally vertical tiling by a finite sequence of flips.
\end{lemma}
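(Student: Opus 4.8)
The plan is to introduce Thurston's height function, which turns a flip into a local $\pm4$ move at a single vertex and lets me run a monotone descent. Checkerboard-color the unit squares of $R_{m,n}$ and let $V=\{0,1,\dots,m\}\times\{0,1,\dots,n\}$ be the set of corners. For a tiling $D$ I would define $h_D\colon V\to\mathbb{Z}$ by fixing $h_D(0,0)=0$ and prescribing the change along each grid edge: traversing an edge with a black square on the left, $h_D$ increases by $1$ if the edge is not interior to a domino and decreases by $3$ if it is, with the opposite signs when a white square is on the left. The first step is to check that these increments sum to $0$ around every unit square — which holds precisely because each square meets its domino-partner across exactly one of its four edges — so that $h_D$ is well defined.

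Two structural facts drive everything. First, along any grid edge the increment is $\equiv+1$ (black on left) or $\equiv-1$ (white on left) modulo $4$, independently of whether the edge is covered, since $-3\equiv1$ and $3\equiv-1\pmod4$; hence the residue $h_D(v)\bmod 4$ is a fixed function of $v$, the same for every tiling. Second, the values of $h_D$ on the boundary of $R_{m,n}$ are forced (no domino leaves the region), and a flip of a $2\times2$ block changes $h_D$ by $\pm4$ at the central vertex and nowhere else; conversely an interior vertex $v$ is a strict local maximum of $h_D$ exactly when the four squares around $v$ form a flippable block, and the downward flip there lowers $h_D(v)$ by $4$. I also record that $h_D$ determines $D$, since the covered edges are exactly those across which $h_D$ jumps by $\pm3$.

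With this dictionary the argument is a descent on the potential $\Phi(D)=\sum_{v\in V}h_D(v)$. Because the boundary values are fixed and adjacent values differ by at most $3$, $\Phi$ is bounded below uniformly in $D$. Starting from an arbitrary tiling I repeatedly flip down at an interior local maximum; each such flip decreases $\Phi$ by $4$, so after finitely many flips I reach a tiling $D_0$ with no interior local maximum. The key claim is that such a $D_0$ is unique: if $D_0,D_0'$ both have no interior local maximum and $f=h_{D_0}-h_{D_0'}$ were nonzero, I would take an interior vertex where $f$ is maximal and positive (the maximizing set is interior since $f$ vanishes on the boundary) and use the mod $4$ fact — adjacent differences of $h_{D_0}$ and $h_{D_0'}$ lie in the same two-element subset of $\{\pm1,\pm3\}$ — to show that from any maximizing vertex, which is not a local maximum of $h_{D_0}$, one can step to an adjacent maximizing vertex along which $h_{D_0}$ strictly increases. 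Since $V$ is finite, this walk can neither repeat (as $h_{D_0}$ strictly increases) nor terminate (a terminal vertex would be an interior local maximum of $h_{D_0}$), a contradiction; hence $f\equiv0$ and $D_0=D_0'$. Thus every tiling is connected by flips to the single tiling $D_0$, and since flips are involutions, every tiling — in particular the totally vertical one, which exists because $n$ is even — is connected to every other, proving the lemma.

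The step I expect to be the main obstacle is the uniqueness of the flip-minimal tiling $D_0$; the mod $4$ congruence is exactly what prevents the maximizing set of $f$ from ever reaching a local maximum of $h_{D_0}$, and making this propagation argument airtight is the heart of the proof. Setting up the height function and the flip-to-$\pm4$ correspondence is routine but must be done with care, especially the local characterization of strict extrema as flippable blocks.
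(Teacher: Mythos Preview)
Your argument via Thurston's height function is correct and is a genuinely different route from the paper's. The paper gives a bare-hands algorithm: starting from an arbitrary tiling of $R_{m,n}$, it walks along the staircase $q_k=(\lceil k/2\rceil,\lceil (k+1)/2\rceil)$ until it finds two consecutive squares whose covering dominoes have the same orientation, then performs flips back down the staircase to force $(1,1)$ and $(2,1)$ to be covered by vertical dominoes; iterating this column-pair by column-pair and row-pair by row-pair straightens the whole tiling. That proof is completely elementary and self-contained, tailored to the rectangle, and gives an explicit flip sequence. Your approach trades this explicitness for structure and generality: the height function works on any simply connected board, the descent on $\Phi(D)=\sum_v h_D(v)$ replaces the ad hoc staircase, and the uniqueness of the flip-minimal tiling is a clean lattice-theoretic fact. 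One small imprecision worth tightening: a flippable $2\times2$ block around an interior vertex $v$ makes $v$ a strict local \emph{extremum}, not necessarily a maximum---which one depends on the orientation of the dominoes relative to the checkerboard coloring---so your ``exactly when'' should be ``only when'' (or you should specify the orientation). This does not affect the proof, since you only use the implication ``strict local maximum $\Rightarrow$ downward-flippable block'', which is correct.
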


\begin{proof}
    Let $D$ be a domino tiling of $R_{m,n}$.
    We present an algorithm to transform $D$ into the totally vertical tiling by a finite sequence of flips.
    First we transform $D$ so that the following condition is satisfied:
    \begin{quote}
        $(\heartsuit)$ $(1,1)$ and $(2,1)$ are covered by vertical dominoes.
    \end{quote}
    Suppose that the square $(1,1)$ is covered by a horizontal domino.
    We define squares $q_1,q_2,\cdots$ by
    $$
        q_k = (\lceil k/2 \rceil,\lceil (k+1)/2 \rceil).
    $$
    Take the smallest $k$ such that the dominoes covering $q_k$ and $q_{k+1}$ are both horizontal or both vertical.
    Then performing flips for $q_k+\{0,1\}^2,q_{k-1}+\{0,1\}^2,\dots,q_1+\{0,1\}^2$ in this order, we can achieve $(\heartsuit)$.
    Similarly we can achieve $(\heartsuit)$ if the square $(1,1)$ is covered by a vertical domino.

    \begin{center}
        \begin{tikzpicture}[scale=0.5]
        \newcommand{\horizontal}{
            \draw (0,0) rectangle (2,1);
        }
        \newcommand{\vertical}{
            \draw (0,0) rectangle (1,2);
        }
        \node foreach \x in {1,2,3,4} at (\x-0.5, -0.5) {$\x$};
        \node foreach \x in {1,2,3,4} at (-0.5, \x-0.5) {$\x$};
        \begin{scope}[shift={(0,0)}]
            \horizontal
            \node at (0.5,0.5) {$q_1$};
        \end{scope}
        \begin{scope}[shift={(0,1)}]
            \vertical
            \node at (0.5,0.5) {$q_2$};
        \end{scope}
        \begin{scope}[shift={(1,1)}]
            \horizontal
            \node at (0.5,0.5) {$q_3$};
        \end{scope}
        \begin{scope}[shift={(1,2)}]
            \vertical
            \node at (0.5,0.5) {$q_4$};
        \end{scope}
        \begin{scope}[shift={(2,2)}]
            \horizontal
            \node at (0.5,0.5) {$q_5$};
        \end{scope}
        \begin{scope}[shift={(2,3)}]
            \horizontal
            \node at (0.5,0.5) {$q_6$};
        \end{scope}
        \end{tikzpicture}
    \end{center}
    
    The same argument shows that we can further transform $D$ so that $(1,3)$ and $(2,3)$ are covered by vertical dominoes.
    Repeating this procedure, we can transform $D$ into the totally vertical tiling.
\end{proof}

\begin{lemma}\label{h(D)_even}
    Let $m,n$ be poisitive integers with $n$ even.
    For any domino tiling $D$ of an $R_{m,n}$, the number of horizontal dominoes $h(D)$ is even.
\end{lemma}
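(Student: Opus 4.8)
The plan is to leverage \Cref{flip_transitivity}, which was established precisely for this setting (with $n$ even). First I would observe that the totally vertical tiling $D_0$ of $R_{m,n}$ satisfies $h(D_0)=0$, which is even, so the statement holds for the base tiling. By \Cref{flip_transitivity}, an arbitrary domino tiling $D$ of $R_{m,n}$ is obtained from $D_0$ by a finite sequence of flips, so it suffices to show that the parity of $h$ is invariant under a single flip.

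The key step is the elementary observation that a flip changes $h$ by exactly $\pm 2$. Indeed, a flip acts on a $2\times 2$ block covered by two dominoes; such a block is covered either by two horizontal dominoes or by two vertical dominoes, and the flip interchanges these two configurations, while leaving all dominoes outside the block unaffected. Hence each flip either increases $h$ by $2$ (a vertical pair becoming a horizontal pair) or decreases it by $2$ (a horizontal pair becoming a vertical pair), and in either case the parity of $h$ is unchanged. Combining this with the base case and \Cref{flip_transitivity} gives $h(D)\equiv h(D_0)=0 \pmod 2$, as desired.

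I do not expect any serious obstacle here: the substantive content is already packaged in \Cref{flip_transitivity}, and the remaining verification — that a flip alters $h$ by $\pm 2$ — is immediate from the definition of a flip. If one preferred a self-contained argument avoiding flips altogether, an alternative would be a direct counting argument: for each $1\le i\le m-1$, counting the squares in columns $1,\dots,i$ modulo $2$ shows that the number of horizontal dominoes straddling the boundary between columns $i$ and $i+1$ has the same parity as $in$, which is even since $n$ is even. Summing over $i$, and noting that every horizontal domino straddles exactly one such boundary, shows that $h(D)$ is even.
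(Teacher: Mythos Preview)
Your proof is correct and follows exactly the paper's approach: the paper also observes that the claim holds for the totally vertical tiling and then invokes \Cref{flip_transitivity} together with the fact that a flip does not change the parity of $h(D)$. Your additional direct counting argument is a nice self-contained alternative, but the primary argument matches the paper's.
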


\begin{proof}
    The claim is clear for the totally vertical tiling.
    Since a flip does not change the parity of $h(D)$, the claim is true for any domino tiling by \Cref{flip_transitivity}.
\end{proof}

\subsection{Norms}
    %Let $A$ be commutative ring and $B$ be an $A$-algebra which is finite free as an $A$-module.
    %For $b\in B$, the norm $\Nm_{B/A}(b)$ is defined to be the determinant of the multiplication map $\times b\colon B\to B$ as a homomorphism of $A$-modules.
    Let $A$ be a $\mathbb{C}$-algebra which is finite-dimensional as a $\mathbb{C}$-vector space.
    For $a\in A$, the norm $\Nm_{A/\mathbb{C}}(a)$ is defined to be the determinant of the multiplication map $\times a\colon A\to A$, viewed as a $\mathbb{C}$-linear map.

\begin{lemma}\label{norm_product}
    Let $P(T),Q(T)\in \mathbb{C}[T]$ be separable monic polynomials.
    Set $A=\mathbb{C}[T_1,T_2]/(P(T_1),Q(T_2))$ and write $t_i$ for the image of $T_i$ in $A$.
    Let $\xi_1,\dots,\xi_d$ (resp. $\eta_1,\dots,\eta_e$) be the roots of $P(T)$ (resp. $Q(T)$) in $\mathbb{C}$.
    Then for any $f(T_1,T_2)\in \mathbb{C}[T_1,T_2]$, we have
    $$
        \Nm_{A/\mathbb{C}}(f(t_1,t_2))=\prod_{i=1}^d \prod_{j=1}^e f(\xi_i,\eta_j).
    $$
\end{lemma}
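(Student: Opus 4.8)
The plan is to use separability to decompose $A$ as a product of copies of $\mathbb{C}$, and then observe that the norm of an element in such a product is simply the product of its components.

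First I would use the separability of $P$ and $Q$ to write $P(T)=\prod_{i=1}^d (T-\xi_i)$ and $Q(T)=\prod_{j=1}^e (T-\eta_j)$ with the $\xi_i$ pairwise distinct and the $\eta_j$ pairwise distinct. By the Chinese Remainder Theorem, the evaluation maps furnish $\mathbb{C}$-algebra isomorphisms $\mathbb{C}[T_1]/(P(T_1))\xrightarrow{\sim}\prod_{i=1}^d\mathbb{C}$ and $\mathbb{C}[T_2]/(Q(T_2))\xrightarrow{\sim}\prod_{j=1}^e\mathbb{C}$. Since $A\cong (\mathbb{C}[T_1]/(P(T_1)))\otimes_{\mathbb{C}}(\mathbb{C}[T_2]/(Q(T_2)))$, tensoring these over $\mathbb{C}$ yields an isomorphism of $\mathbb{C}$-algebras
$$
    \Phi\colon A\xrightarrow{\sim}\prod_{i=1}^d\prod_{j=1}^e\mathbb{C},\qquad f(t_1,t_2)\mapsto (f(\xi_i,\eta_j))_{i,j}.
$$
That $\Phi$ has this explicit form is immediate from the fact that it is an algebra homomorphism sending $t_1$ and $t_2$ to the tuples $(\xi_i)$ and $(\eta_j)$ respectively.

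Next I would reduce the norm computation to this product decomposition. The key observation is that the norm is multiplicative over a finite product of algebras: if $A\cong\prod_k B_k$, then multiplication by $a=(a_k)_k$ preserves each factor $B_k$, so the matrix of $\times a$ with respect to a basis adapted to the decomposition is block diagonal, whence $\Nm_{A/\mathbb{C}}(a)=\prod_k\Nm_{B_k/\mathbb{C}}(a_k)$. Applying this with each $B_k=\mathbb{C}$ and noting $\Nm_{\mathbb{C}/\mathbb{C}}(c)=c$, I obtain
$$
    \Nm_{A/\mathbb{C}}(f(t_1,t_2))=\prod_{i=1}^d\prod_{j=1}^e f(\xi_i,\eta_j),
$$
as desired.

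I do not expect a serious obstacle here; the only points requiring a little care are verifying that the composite $\Phi$ really sends $f(t_1,t_2)$ to the tuple of evaluations $(f(\xi_i,\eta_j))_{i,j}$, and confirming the block-diagonal structure of multiplication in a product algebra. Both are routine, so the separability hypothesis (which guarantees the clean splitting into copies of $\mathbb{C}$, avoiding any Jordan-block contributions to the determinant) is what does the real work.
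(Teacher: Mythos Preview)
Your proof is correct and follows essentially the same approach as the paper: both use the Chinese Remainder Theorem to identify $A$ with $\prod_{i,j}\mathbb{C}$ via $f(t_1,t_2)\mapsto (f(\xi_i,\eta_j))_{i,j}$ and read off the norm from this decomposition. The paper simply declares the conclusion ``clear'' from the isomorphism, whereas you spell out the block-diagonal reasoning explicitly.
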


\begin{proof}
    By the Chinese remainder theorem, we have an isomorphism of $\mathbb{C}$-algebras
    $$
        A\cong \prod_{i=1}^d\prod_{j=1}^e \mathbb{C}[T_1,T_2]/(T_1-\xi_i,T_2-\eta_j)
        \cong \prod_{i=1}^d\prod_{j=1}^e \mathbb{C};\quad f(t_1,t_2)\mapsto f(\xi_i,\eta_j).
    $$
    The claim is clear from this isomorphism.
\end{proof}

\section{Algebraic Proof}

\subsection{Kasteleyn operator}
Following Kasteleyn's method \cite{Kasteleyn_domino}, we express the left hand side of \Cref{eq:main} as the determinant of a certain linear operator, which we call the Kasteleyn operator.

Let $m,n$ be positive integers and assume that $n$ is odd.
Let $F$ denote the $\mathbb{C}$-vector space freely generated by the squares in $R_{m-1,n-1}$.
We define $V$ to be the quotient of $F$ by relations $(i,j)=-(i,n-j)$, and write $e_{i,j}$ for the image of $(i,j)$ in $V$.
Then we have
$$
    \dim_\mathbb{C} V = \dfrac{(m-1)(n-1)}{2},
$$
and the set $\{e_{i,j}\mid i+j\equiv \mu\pmod 2\}$ is a $\mathbb{C}$-basis of $V$ for each $\mu = 0,1$.
\begin{definition}
    We define the \emph{Kasteleyn operator} $K\colon V\to V$ by
    $$
    K(e_{i,j}) = e_{i-1,j} + e_{i+1,j} + e_{i,j-1} + e_{i,j+1} \quad (i+j\equiv 0\mod 2).
    $$
Here, we set $e_{i,j}=0$ if $i\in \{0,m\}$ or $j\in \{0,n\}$.
\end{definition}

\begin{lemma}\label{kasteleyn_det}
    The following equality holds:
    $$
        \sum_{D\in \D(R_{m-1,n-1})} \sqrt{-1}^{h(D)} =
        \begin{cases}
            \det K & (m\equiv 1 \pmod 2)\\
            (-1)^{\frac{n^2-1}{8}}\cdot \det K & (m\equiv 0 \pmod 2).
        \end{cases}
    $$
\end{lemma}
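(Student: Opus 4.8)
The plan is to expand $\det K$ as a signed sum over tilings. Write $W=\{(i,j)\mid i+j\equiv 0\pmod 2\}$ and $B=\{(i,j)\mid i+j\equiv 1\pmod 2\}$ for the even and odd squares of $R_{m-1,n-1}$, so that $\{e_{i,j}\mid (i,j)\in W\}$ is a basis of $V$ and $N:=\dim_{\mathbb C}V=\frac{(m-1)(n-1)}2=|W|=|B|$. Set $\tau(i,j)=(i,n-j)$; since $n$ is odd, $\tau$ is an involution exchanging $W$ and $B$, and the defining relation of $V$ reads $e_{i,j}=-e_{\tau(i,j)}$. Rewriting each odd-indexed neighbour appearing in $K(e_{w'})$ through this relation, I find that the matrix of $K$ in the even basis has all entries in $\{0,-1\}$: the entry $[K]_{w,w'}$, i.e.\ the coefficient of $e_w$ in $K(e_{w'})$, equals $-1$ exactly when $\tau(w)$ is adjacent to $w'$. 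Hence
$$
\det K=\sum_\sigma \sgn(\sigma)\prod_{w'\in W}[K]_{\sigma(w'),w'}=(-1)^N\sum_\sigma \sgn(\sigma),
$$
where $\sigma$ ranges over the permutations of $W$ such that $\tau(\sigma(w'))$ is adjacent to $w'$ for every $w'$.

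Such permutations correspond bijectively to domino tilings: given $\sigma$, the composite $\psi=\tau\circ\sigma\colon W\to B$ is a bijection sending each even square to an adjacent odd square, so the pairs $\{w,\psi(w)\}$ form a tiling $D$, and every tiling arises from a unique $\sigma$, which I denote $\sigma_D$. Thus $\det K=(-1)^N\sum_{D\in\D(R_{m-1,n-1})}\sgn(\sigma_D)$. Since $n-1$ is even, \Cref{h(D)_even} shows that $h(D)$ is always even, so $\sqrt{-1}^{\,h(D)}=(-1)^{h(D)/2}\in\{\pm1\}$, and it remains to compare the two $\{\pm1\}$-valued functions $D\mapsto(-1)^N\sgn(\sigma_D)$ and $D\mapsto\sqrt{-1}^{\,h(D)}$.

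I claim the ratio of these two functions is constant. A flip changes the tiling only inside one $2\times2$ block, where it swaps the two partners of the two even squares of that block; therefore $\sigma_D$ is replaced by its composition with a transposition, so $\sgn(\sigma_D)$ changes sign. Meanwhile a flip changes $h(D)$ by $\pm2$, so $\sqrt{-1}^{\,h(D)}$ also changes sign. By \Cref{flip_transitivity}, applied to $R_{m-1,n-1}$ whose height $n-1$ is even, every tiling is obtained from the totally vertical tiling $D_0$ by flips, so there is a constant $c\in\{\pm1\}$ with $(-1)^N\sgn(\sigma_D)=c\,\sqrt{-1}^{\,h(D)}$ for all $D$; as $h(D_0)=0$ we have $c=(-1)^N\sgn(\sigma_{D_0})$. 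Summing over $D$ gives $\det K=c\sum_{D}\sqrt{-1}^{\,h(D)}$, so the lemma reduces to showing $c=1$ when $m$ is odd and $c=(-1)^{(n^2-1)/8}$ when $m$ is even.

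To evaluate $c$ I compute $\sigma_{D_0}$ explicitly. Both $\tau$ and the vertical matching $\psi_{D_0}$ preserve each column, so $\sigma_{D_0}$ is block diagonal over the $m-1$ columns, and on the $\frac{n-1}2$ even squares of a single column it acts as the order-reversing involution, of sign $(-1)^{\lfloor(n-1)/4\rfloor}$. Hence $\sgn(\sigma_{D_0})=(-1)^{(m-1)\lfloor(n-1)/4\rfloor}$ and
$$
c=(-1)^N\sgn(\sigma_{D_0})=(-1)^{(m-1)\left(\frac{n-1}2+\lfloor(n-1)/4\rfloor\right)}.
$$
For $m$ odd the exponent is even, so $c=1$; for $m$ even it reduces to $(-1)^{\frac{n-1}2+\lfloor(n-1)/4\rfloor}=(-1)^{(n^2-1)/8}$, using the elementary congruence $\frac{n-1}2+\lfloor(n-1)/4\rfloor\equiv\frac{n^2-1}8\pmod 2$ for odd $n$. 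This yields precisely the claimed constant. I expect the main obstacle to be the sign bookkeeping in this last step: verifying that a flip is realized by a single transposition of $\sigma_D$, and pinning down the base-case signature together with its identification with $(-1)^{(n^2-1)/8}$.
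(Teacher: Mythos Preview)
Your argument is correct and follows essentially the same route as the paper: expand $\det K$ over permutations, identify the nonzero terms with tilings via the involution $\tau(i,j)=(i,n-j)$ (the paper's $\gamma$), reduce to a constant sign by the flip argument of \Cref{flip_transitivity}, and evaluate that constant on the totally vertical tiling as an order-reversing involution on each column. The only cosmetic difference is that you work directly with $\sigma=\tau^{-1}\circ\psi$ and the explicit $\{0,-1\}$ matrix entries, whereas the paper keeps the admissible bijection $\varphi$ and $\gamma$ separate; the base-case signature $(-1)^{\lfloor(n-1)/4\rfloor}$ you obtain matches the paper's $(-1)^{(n-1)(n-3)/8}$.
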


\begin{proof}
    For $\mu\in \{0,1\}$, we define a set $T_\mu$ as follows:
    $$
        T_\mu = \{(i,j)\mid 1\leq i\leq m-1,\;1\leq j\leq n-1,\;i+j\equiv \mu \pmod 2\}.
    $$
    Then we have canonical identifications $V\cong \mathbb{C}^{\oplus T_0}\cong \mathbb{C}^{\oplus T_1}$.
    We say that a map $\varphi\colon T_0\to T_1$ is \emph{admissible} if for any $(i,j)\in T_0$ we have
    $$
        \varphi(i,j)\in \{(i-1,j), (i+1,j), (i,j-1), (i,j+1)\}.
    $$
    By the multilinearity of the determinant, we have
    $$
        \det K = \sum_{\substack{\varphi\colon T_0\to T_1\\ \text{admissible}}}\det \varphi_*,
    $$
    where $\varphi_*\colon V\cong \mathbb{C}^{\oplus T_0}\to \mathbb{C}^{\oplus T_1}\cong V$ denotes the $\mathbb{C}$-linear map induced by $\varphi\colon T_0\to T_1$.
    We have $\det \varphi_* = 0$ if $\varphi$ is not a bijection.
    Define $\gamma\colon T_1\to T_0$ by $\gamma(i,j)=(i,n-j)$.
    Then we have $\gamma_*=-\id$ and hence $\det \gamma_* = (-1)^{\frac{(m-1)(n-1)}{2}}$.
    If $\varphi$ is bijective, then the value of $\det (\gamma\circ\varphi)_*$ is equal to the signature of the permutation $\gamma\circ \varphi$ on $T_0$.
    Combining this and the above formula, we get
    $$
        \det K = \sum_{\substack{\varphi\colon T_0\to T_1\\ \text{admissible bij.}}}
        (-1)^{\frac{(m-1)(n-1)}{2}}\sgn (\gamma\circ \varphi).
    $$
    
    Admissible bijections from $T_0$ to $T_1$ corresponds bijectively to domino tilings of $R_{m-1,n-1}$.
    Therefore it suffices to show that, for an admissible bijection $\varphi$ corresponding to a domino tiling $D$, we have
    \begin{align}\label{sgn_vs_h(D)}
        (-1)^{\frac{(m-1)(n-1)}{2}}\sgn (\gamma\circ \varphi) =
        \begin{cases}
            \sqrt{-1}^{h(D)}&(m\equiv 1 \pmod 2)\\
            (-1)^{\frac{n^2-1}{8}}\cdot \sqrt{-1}^{h(D)}&(m\equiv 0 \pmod 2).
        \end{cases}
    \end{align}

    If $D$ is the totally vertical tiling (i.e. $h(D)=0$), then the corresponding admissible bijection $\varphi\colon T_0\to T_1$ is given by
    $$
        \varphi(i,j)=\begin{cases}
            (i,j+1)&(j\equiv 1 \pmod 2)\\
            (i,j-1)&(j\equiv 0 \pmod 2).
        \end{cases}
    $$
    Therefore the permutation $\gamma\circ\varphi\colon T_0\to T_0$ is given by
    $$
        \gamma\circ \varphi(i,j)=\begin{cases}
            (i,n-j-1)&(j\equiv 1 \pmod 2)\\
            (i,n-j+1)&(j\equiv 0 \pmod 2).
        \end{cases}
    $$
    This permutation is isomorphic to a disjoint union of $m-1$ copies of the permutation
    $$
        \sigma\colon \biggl\{1,2,\dots,\dfrac{n-1}{2}\biggr\}\to \biggl\{1,2,\dots,\dfrac{n-1}{2}\biggr\};\quad x\mapsto \dfrac{n+1}{2}-x.
    $$
    If $m$ is odd, we have $\sgn(\gamma\circ \varphi)=1$.
    If $m$ is even, then we have
    $$
        \sgn(\gamma\circ \varphi)=\sgn(\sigma)=(-1)^{\frac{(n-1)(n-3)}{8}}.
    $$
    This shows that \Cref{sgn_vs_h(D)} holds for the totally vertical tiling.
    
    Now recall from \Cref{flip_transitivity} that any domino tiling of $R_{m-1,n-1}$ can be obtained from the totally vertical tiling by a finite sequence of flips.
    A flip changes the signature of $\gamma\circ\varphi$, and also changes $h(D)$ by $2$.
    This shows that \Cref{sgn_vs_h(D)} holds for any domino tiling.
\end{proof}

\subsection{Realization in an algebra}

Next we realize the Kasteleyn operator as the multiplication map in a certain finite dimensional $\mathbb{C}$-algebra.
We set $\zeta_N = e^{2\pi i/N}$ for $N\geq 1$.
First we define $\mathbb{C}$-algebras $A_m,A_m^+$ by
$$
    A_m:=\mathbb{C}[X]/((X^{2m}-1)/(X^2-1)),\quad A_m^+:=\mathbb{C}[x+x^{-1}]\subset A_m,
$$
where $x=[X]\in A_m$.
Let $P_m(T)$ be the unique polynomial of degree $m-1$ satisfying
$$
    P_m(T+T^{-1})=\dfrac{1}{T^{m-1}}\cdot\dfrac{T^{2m}-1}{T^2-1} = T^{m-1}+T^{m-3}+\cdots+T^{-(m-1)}.
$$
It is easy to see that there is an isomorphism of $\mathbb{C}$-algebras $\mathbb{C}[T]/(P_m(T))\cong A_m^+;\;t\mapsto x+x^{-1}$, where $t=[T]$.
The roots of $P_m$ are given as follows:
$$
    P_m(z)=0\iff z\in \{\zeta_{2m}^i + \zeta_{2m}^{-i}\mid 1\leq i\leq m-1\}.
$$

Similarly, we define $\mathbb{C}$-algebras $B_n,B_n^+$ by
$$
    B_n:=\mathbb{C}[Y]/((Y^n-1)/(Y-1)),\quad B_n^+:=\mathbb{C}[y+y^{-1}]\subset B_n,
$$
where $y=[Y]\in B_n$.
Let $Q_n(T)$ be the unique polynomial of degree $\frac{n-1}{2}$ satisfying
$$
    Q_n(T+T^{-1})=\dfrac{1}{T^{\frac{n-1}{2}}}\cdot\dfrac{T^n-1}{T-1} = T^{\frac{n-1}{2}}+T^{\frac{n-3}{2}}+\cdots+T^{-\frac{n-1}{2}}.
$$
Then it is easy to see that there is an isomorphism of $\mathbb{C}$-algebras $\mathbb{C}[T]/(Q_n(T))\cong B_n^+;\;t\mapsto y+y^{-1}$, where $t=[T]$.
The roots of $Q_n$ are given as follows:
$$
    Q_n(z)=0\iff z\in \{\zeta_n^j + \zeta_n^{-j}\mid 1\leq j\leq \tfrac{n-1}{2}\}.
$$

We define a Laurent polynomial $S_i(T)$ by
$$
    S_i(T):=\dfrac{T^i-T^{-i}}{T-T^{-1}} = T^{i-1}+T^{i-3}+\cdots+T^{-(i-1)}.
$$
Then $\{S_i(x)\mid 1\leq i\leq m-1\}$ is a $\mathbb{C}$-basis of $A_m^+$ , and $\{S_j(y)\mid 1\leq j\leq \frac{n-1}{2}\}$ is a $\mathbb{C}$-basis of $B_n^+$. 
As $S_j(y)=-S_{n-j}(y)$, the set $\{S_j(y)\mid 1\leq j\leq n-1,\;j\equiv \mu \pmod 2\}$ is also a $\mathbb{C}$-basis of $B_n^+$ for each $\mu\in \{0,1\}$.
Consequently, the set
$$
    \{S_i(x)S_j(y)\mid 1\leq i\leq m-1,\;1\leq j\leq n-1,\;i+j\equiv \mu\pmod 2\}
$$
is a $\mathbb{C}$-basis of $A_m^+\otimes_\mathbb{C} B_n^+$ for each $\mu\in \{0,1\}$.
Note that we have
$$
    S_0(x)=S_m(x)=0,\quad S_0(y)=S_n(y)=0.
$$
The relation $S_j(y)=-S_{n-j}(y)$ implies that there is an isomorphism of $\mathbb{C}$-vector spaces
$$
    \lambda \colon V\xrightarrow{\sim} A_m^+\otimes_\mathbb{C} B_n^+;\quad e_{i,j}\mapsto S_i(x)S_j(y).
$$

\begin{lemma}\label{detK_vs_norm}
    Let $\alpha:=x+x^{-1}+y+y^{-1}\in A_m^+\otimes_\mathbb{C} B_n^+$.
    Then the following diagram of $\mathbb{C}$-vector spaces is commutative:
    $$
    \xymatrix{
        V\ar[r]^-{\lambda}_-{\sim}\ar[d]^-{K}      &A_m^+\otimes_\mathbb{C} B_n^+\ar[d]^-{\times \alpha}\\
        V\ar[r]^-{\lambda}_-{\sim}                        &A_m^+\otimes_\mathbb{C} B_n^+.
    }
    $$
    In particular, we have $\det K=\Nm_{A_m^+\otimes_\mathbb{C} B_n^+/\mathbb{C}}(\alpha)$.
\end{lemma}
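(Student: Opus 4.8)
The plan is to verify the commutativity of the diagram on a basis and then deduce the determinant statement formally. Since $\lambda$ is a $\mathbb{C}$-linear isomorphism with $\lambda(e_{a,b})=S_a(x)S_b(y)$ for all $a,b$, and since $K$ is defined on the basis $\{e_{i,j}\mid i+j\equiv 0\pmod 2\}$, it suffices to check that
$$
    \lambda(K(e_{i,j}))=\alpha\cdot \lambda(e_{i,j})
$$
for each such basis element. Using linearity of $\lambda$ and the defining formula for $K$, the left-hand side is $S_{i-1}(x)S_j(y)+S_{i+1}(x)S_j(y)+S_i(x)S_{j-1}(y)+S_i(x)S_{j+1}(y)$, so the whole lemma reduces to producing these four terms from $\alpha\cdot S_i(x)S_j(y)$.

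The key computational input is the three-term recurrence
$$
    (T+T^{-1})S_i(T)=S_{i+1}(T)+S_{i-1}(T),
$$
which I would verify directly as a Laurent polynomial identity by expanding $(T+T^{-1})(T^i-T^{-i})=(T^{i+1}-T^{-(i+1)})+(T^{i-1}-T^{-(i-1)})$ and dividing by $T-T^{-1}$. Applying this with $T=x$ in $A_m^+$ and with $T=y$ in $B_n^+$ gives $(x+x^{-1})S_i(x)=S_{i+1}(x)+S_{i-1}(x)$ and $(y+y^{-1})S_j(y)=S_{j+1}(y)+S_{j-1}(y)$. Then writing $\alpha\cdot S_i(x)S_j(y)=\bigl((x+x^{-1})S_i(x)\bigr)S_j(y)+S_i(x)\bigl((y+y^{-1})S_j(y)\bigr)$ and substituting the two recurrences produces exactly the four terms above, i.e. $\lambda(K(e_{i,j}))$.

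The one point requiring care is the behavior at the edges of the rectangle. The operator $K$ uses the convention $e_{i,j}=0$ whenever $i\in\{0,m\}$ or $j\in\{0,n\}$, and I would check that this is matched by the vanishing $S_0(x)=S_m(x)=0$ and $S_0(y)=S_n(y)=0$ recorded just before the lemma, so that the recurrences degenerate correctly when $i-1=0$, $i+1=m$, $j-1=0$, or $j+1=n$. This boundary consistency is precisely what forced the particular choice of defining relations for $A_m^+$ and $B_n^+$, and I expect it to be the only real obstacle; once the recurrence and its boundary degeneration are in hand, the main identity is immediate.

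Finally, the determinant statement is formal. The commuting diagram shows $K=\lambda^{-1}\circ(\times\alpha)\circ\lambda$, so $\det K=\det(\times\alpha\colon A_m^+\otimes_\mathbb{C} B_n^+\to A_m^+\otimes_\mathbb{C} B_n^+)$, and by the definition of the norm this last determinant equals $\Nm_{A_m^+\otimes_\mathbb{C} B_n^+/\mathbb{C}}(\alpha)$.
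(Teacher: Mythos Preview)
Your proposal is correct and follows essentially the same approach as the paper: both verify $\alpha\cdot\lambda(e_{i,j})=\lambda(K(e_{i,j}))$ on basis elements via the recurrence $(T+T^{-1})S_i(T)=S_{i-1}(T)+S_{i+1}(T)$, using the boundary vanishing $S_0=S_m=0$ and $S_0=S_n=0$ to match the convention $e_{i,j}=0$ at the edges. Your write-up is in fact slightly more explicit than the paper's, spelling out the recurrence verification and the conjugation argument for $\det K$.
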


\begin{proof}
    We have
    \begin{align*}
        \alpha\cdot \lambda(e_{i,j})
        =&(x+x^{-1}+y+y^{-1})S_i(x)S_j(y)\\
        =&(x+x^{-1})S_i(x)S_j(y) + S_i(x)(y+y^{-1})S_j(y)\\
        =&(S_{i-1}(x)+S_{i+1}(x))S_j(y)+S_i(x)(S_{j-1}(y)+S_{j+1}(y))\\
        =&\lambda(e_{i-1,j})+\lambda(e_{i+1,j})+\lambda(e_{i,j-1})+\lambda(e_{i,j+1})\\
        =&\lambda(K(e_{i,j})).
    \end{align*}
    This shows that the above diagram is commutative.
\end{proof}

\subsection{Computation of the norm}

Finally, we compute the norm of $\alpha \in A_m^+\otimes_\mathbb{C} B_n^+$.

\begin{lemma}\label{norm_computation}
    We have
    $$
        \Nm_{A_m^+\otimes_\mathbb{C} B_n^+/\mathbb{C}}(\alpha)=
        \zeta_n^N
        \prod_{j=1}^{\frac{n-1}{2}}\dfrac{\xi_n^{mj}-1}{\xi_n^j-1}
    $$
    for some $N\in \mathbb{Z}$, where $\xi_n=\zeta_n^2$.
    If $m,n$ are not coprime, then this norm is $0$.
\end{lemma}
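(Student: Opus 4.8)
The plan is to reduce the computation to \Cref{norm_product} and then evaluate the resulting double product of roots of unity by recognizing it as a value of the polynomial $P_m$. Under the isomorphisms $A_m^+\cong \mathbb{C}[T_1]/(P_m(T_1))$ and $B_n^+\cong \mathbb{C}[T_2]/(Q_n(T_2))$ sending $t_1\mapsto x+x^{-1}$ and $t_2\mapsto y+y^{-1}$, we obtain $A_m^+\otimes_\mathbb{C}B_n^+\cong \mathbb{C}[T_1,T_2]/(P_m(T_1),Q_n(T_2))$, under which $\alpha$ corresponds to $f(t_1,t_2)$ with $f(T_1,T_2)=T_1+T_2$. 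Writing $\xi_i=\zeta_{2m}^i+\zeta_{2m}^{-i}$ $(1\le i\le m-1)$ for the roots of $P_m$ and $\eta_j=\zeta_n^j+\zeta_n^{-j}$ $(1\le j\le \frac{n-1}{2})$ for the roots of $Q_n$, \Cref{norm_product} gives
$$
\Nm_{A_m^+\otimes_\mathbb{C}B_n^+/\mathbb{C}}(\alpha)=\prod_{i=1}^{m-1}\prod_{j=1}^{\frac{n-1}{2}}(\xi_i+\eta_j).
$$

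Next I would compute the inner product over $i$ for a fixed $j$. Since $P_m$ is monic of degree $m-1$ with the distinct roots $\xi_i$, we have $P_m(T)=\prod_{i=1}^{m-1}(T-\xi_i)$, whence
$$
\prod_{i=1}^{m-1}(\xi_i+\eta_j)=(-1)^{m-1}P_m(-\eta_j).
$$
The crucial observation is that $-\eta_j=-\zeta_n^j-\zeta_n^{-j}=T+T^{-1}$ for $T=-\zeta_n^j$, so the defining identity $P_m(T+T^{-1})=(T^{2m}-1)/(T^{m-1}(T^2-1))$ can be applied directly. Substituting $T=-\zeta_n^j$ and cancelling the sign $(-1)^{m-1}$ against the factor $(-\zeta_n^j)^{m-1}$ yields, after writing $\xi_n=\zeta_n^2$,
$$
\prod_{i=1}^{m-1}(\xi_i+\eta_j)=\zeta_n^{-(m-1)j}\cdot\frac{\xi_n^{mj}-1}{\xi_n^j-1}.
$$
I expect this substitution — turning a product over the roots of $P_m$ into a single closed-form value through the generating identity for $P_m$ — to be the main (and essentially the only) nontrivial step; the remainder is bookkeeping of powers of roots of unity.

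Taking the product over $j$ then separates into the product of the fractions and a single power of $\zeta_n$, giving
$$
\Nm_{A_m^+\otimes_\mathbb{C}B_n^+/\mathbb{C}}(\alpha)=\zeta_n^{N}\prod_{j=1}^{\frac{n-1}{2}}\frac{\xi_n^{mj}-1}{\xi_n^j-1},\qquad N=-(m-1)\sum_{j=1}^{\frac{n-1}{2}}j=-(m-1)\frac{n^2-1}{8}\in\mathbb{Z},
$$
which is the asserted formula (here $N$ is an integer because $n$ odd forces $8\mid n^2-1$). Finally, for the vanishing statement, suppose $d:=\gcd(m,n)>1$. Since $n$ is odd, $d$ is odd, hence $d\ge 3$ and $n/d\le (n-1)/2$, so $j_0:=n/d$ lies in the index range. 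For this $j_0$ the numerator satisfies $\xi_n^{mj_0}-1=\zeta_n^{2mn/d}-1=(\zeta_n^{n})^{2m/d}-1=0$, while the denominator $\xi_n^{j_0}-1=\zeta_n^{2n/d}-1$ is nonzero because $\zeta_n^{2n/d}=1$ would require $d\mid 2$, impossible for odd $d>1$. Hence the $j_0$-th factor vanishes and $\Nm_{A_m^+\otimes_\mathbb{C}B_n^+/\mathbb{C}}(\alpha)=0$.
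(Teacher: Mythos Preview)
Your proof is correct and follows essentially the same route as the paper. The only cosmetic difference is packaging: the paper first factors each summand as $\zeta_n^{-j}(\zeta_{2m}^i+\zeta_n^j)(\zeta_{2m}^{-i}+\zeta_n^j)$ and then evaluates $\prod_{i\neq m}(T-\zeta_{2m}^i)=(T^{2m}-1)/(T^2-1)$ at $T=-\zeta_n^j$, whereas you recognize the inner product directly as $(-1)^{m-1}P_m(-\eta_j)$ and invoke the defining identity for $P_m$ at $T=-\zeta_n^j$; both reduce to the same polynomial evaluation, and your version additionally pins down $N$ explicitly and spells out the vanishing argument more carefully.
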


\begin{proof}
    Recall that we have isomorphisms $A_m^+\cong \mathbb{C}[T]/(P_m(T))$ and $B_n^+\cong\mathbb{C}[T]/(Q_n(T))$.
    Therefore we have $R:=A_m^+\otimes_\mathbb{C} B_n^+\cong \mathbb{C}[T_1,T_2]/(P_m(T_1),Q_n(T_2))$.
    If we write $t_i=[T_i]\in R$, then we have $\alpha=t_1+t_2$.
    Using \Cref{norm_product}, the norm can be computed as follows:
    \begin{align*}
        \Nm_{R/\mathbb{C}}(\alpha)
        =&\Nm_{R/\mathbb{C}}(t_1+t_2)\\
        =&\prod_{i=1}^{m-1}\prod_{j=1}^{\frac{n-1}{2}}
        (\zeta_{2m}^i+\zeta_{2m}^{-i}+\zeta_{n}^j+\zeta_{n}^{-j})\\
        =&\prod_{i=1}^{m-1}\prod_{j=1}^{\frac{n-1}{2}}
        \zeta_n^{-j}(\zeta_{2m}^i+\zeta_n^j)(\zeta_{2m}^{-i}+\zeta_n^{j})\\
        =&\zeta_n^N\prod_{\substack{1\leq i\leq 2m-1\\ i\neq m}}\prod_{j=1}^{\frac{n-1}{2}}(\zeta_{2m}^i+\zeta_n^j)
    \end{align*}
    for some $N\in \mathbb{Z}$.
    Using that the numbers $\zeta_{2m}^i\;(1\leq i\leq 2m-1,\;i\neq m)$ are the roots of the polynomial $(T^{2m}-1)/(T^2-1)$, we get
    $$
        \Nm_{R/\mathbb{C}}(\alpha)
        =
        \zeta_n^N\prod_{j=1}^{\frac{n-1}{2}}\dfrac{\zeta_n^{2mj}-1}{\zeta_n^{2j}-1}
        =
        \zeta_n^N\prod_{j=1}^{\frac{n-1}{2}}\dfrac{\xi_n^{mj}-1}{\xi_n^j-1}
    $$
    for some $N\in \mathbb{Z}$.
    If $m,n$ are not coprime, then this product is $0$ since $\xi_n^{mj}=1$ for some $j$.
\end{proof}

\begin{definition}
    Suppose that $m,n$ are coprime.
    Then multiplication by $m$ defines a permutation on $\mathbb{Z}/n\mathbb{Z}$.
    We set $H_n = \{1,2,\dots,\frac{n-1}{2}\}\subset \mathbb{Z}/n\mathbb{Z}$ and define a function $\varepsilon\colon H_n\to \{\pm 1\}$ so that $mH_n = \{\varepsilon(j) j\mid j\in H_n\}$.
    We write $G_n=\{j\in H_n \mid \varepsilon(j)=-1\}$.
\end{definition}

\begin{lemma}\label{product_computation}
    Suppose that $m,n$ are coprime.
    Then we have
    $$
        \displaystyle
        \prod_{j=1}^{\frac{n-1}{2}}\dfrac{\xi_n^{mj}-1}{\xi_n^j-1}=\xi_n^{N'}(-1)^{\#G_n}
    $$
    for some $N'\in \mathbb{Z}$, where $\xi_n = \zeta_n^2$.
\end{lemma}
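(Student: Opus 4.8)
The plan is to evaluate the product directly by reindexing the numerator via the permutation $j\mapsto mj$ of $\mathbb{Z}/n\mathbb{Z}$ and then cancelling against the denominator. Since $n$ is odd and $\gcd(m,n)=1$, the element $\xi_n=\zeta_n^2$ is a primitive $n$-th root of unity, so $\xi_n^k$ depends only on $k\bmod n$. Writing the product over $H_n=\{1,\dots,\frac{n-1}{2}\}$, the numerator is $\prod_{j\in H_n}(\xi_n^{mj}-1)$. As $j$ runs over $H_n$, the residue $mj\bmod n$ runs bijectively over $mH_n$, so I would reindex to obtain $\prod_{j\in H_n}(\xi_n^{mj}-1)=\prod_{k\in mH_n}(\xi_n^k-1)$.

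Next, I would use the defining property $mH_n=\{\varepsilon(j)j\mid j\in H_n\}$ to rewrite this as $\prod_{j\in H_n}(\xi_n^{\varepsilon(j)j}-1)$. Dividing by the denominator $\prod_{j\in H_n}(\xi_n^{j}-1)$, the factors with $\varepsilon(j)=+1$ cancel exactly, leaving
$$
    \prod_{j=1}^{\frac{n-1}{2}}\frac{\xi_n^{mj}-1}{\xi_n^{j}-1}=\prod_{j\in G_n}\frac{\xi_n^{-j}-1}{\xi_n^{j}-1}.
$$
The key computation is the elementary identity $\dfrac{\xi_n^{-j}-1}{\xi_n^{j}-1}=-\xi_n^{-j}$, which follows from $\xi_n^{-j}-1=-\xi_n^{-j}(\xi_n^{j}-1)$.

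Substituting this identity gives $\prod_{j\in G_n}(-\xi_n^{-j})=(-1)^{\#G_n}\,\xi_n^{-\sum_{j\in G_n}j}$, and setting $N'=-\sum_{j\in G_n}j\in\mathbb{Z}$ yields the claimed equality $\xi_n^{N'}(-1)^{\#G_n}$. There is no serious obstacle here; the only point requiring care is the reindexing step, where one must check that multiplication by $m$ is a bijection of $\mathbb{Z}/n\mathbb{Z}$ (using $\gcd(m,n)=1$, together with $1\leq j\leq\frac{n-1}{2}$ forcing $mj\not\equiv 0$) so that the numerator product over $H_n$ genuinely equals the product over $mH_n$ with no repeated or missing residues. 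Since we only need $N'$ to be \emph{some} integer, no further bookkeeping of the powers of $\xi_n$ is required.
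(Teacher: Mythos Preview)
Your proof is correct and follows essentially the same route as the paper: reindex the numerator as $\prod_{k\in mH_n}(\xi_n^{k}-1)$, rewrite via $mH_n=\{\varepsilon(j)j\mid j\in H_n\}$, cancel the $\varepsilon(j)=+1$ factors, and finish with the identity $\dfrac{\xi_n^{-j}-1}{\xi_n^{j}-1}=-\xi_n^{-j}$. The only difference is that you are slightly more explicit about the reindexing and about identifying $N'=-\sum_{j\in G_n}j$.
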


\begin{proof}
    Using the definition of $\varepsilon$ and $G_n$, we can compute as follows:
        $$
            \prod_{j=1}^{\frac{n-1}{2}}\dfrac{\xi_n^{mj}-1}{\xi_n^j-1} =
            \prod_{j\in mH_n}(\xi_n^j-1)\prod_{j\in H_n}(\xi_n^j-1)^{-1} =
            \prod_{j\in H_n}\dfrac{\xi_n^{\varepsilon(j)j}-1}{\xi_n^j-1} =
            \prod_{j\in G_n}\dfrac{\xi_n^{-j}-1}{\xi_n^j-1} =
            \prod_{j\in G_n}-\xi_n^{-j}.
        $$
    The last term can be written as $\xi_m^{N'}(-1)^{\#G_n}$ for some $N'\in \mathbb{Z}$.
\end{proof}

The following lemma is a generalization of Gauss's lemma:

\begin{lemma}\label{Gauss_lemma}
     Suppose that $m,n$ are coprime.
     Then we have
     $$
        (-1)^{\#G_n} = \jac{m}{n}.
     $$
\end{lemma}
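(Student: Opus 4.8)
The plan is to convert the arithmetically rigid quantity $\#G_n$ into the signature of a permutation, which is well behaved under the Chinese remainder theorem, and then to evaluate that signature by reduction to prime powers. First I would reinterpret the left-hand side. For nonzero $x\in \mathbb{Z}/n\mathbb{Z}$ let $\langle x\rangle\in(-n/2,n/2)$ denote the symmetric representative, which is well defined since $n$ is odd. Directly from the definition of $\varepsilon$, both $\#G_n=\#(H_n\setminus mH_n)$ and $\#\{j\in H_n:\langle mj\rangle<0\}$ are equal to $\#\bigl(mH_n\cap(-H_n)\bigr)$, so $(-1)^{\#G_n}=\prod_{j\in H_n}\sgn\langle mj\rangle$, which is the classical Gauss expression. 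Let $\sigma_m\colon \mathbb{Z}/n\mathbb{Z}\to\mathbb{Z}/n\mathbb{Z}$ be multiplication by $m$, a permutation since $\gcd(m,n)=1$. The first key claim is that $(-1)^{\#G_n}=\sgn(\sigma_m)$.

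To prove this I would exploit the involution $\iota\colon x\mapsto -x$, which is fixed-point-free on the nonzero residues (as $n$ is odd) and commutes with $\sigma_m$. Identify $H_n$ with the set of $\iota$-orbits via $j\leftrightarrow\{j,-j\}$; then $\sigma_m$ induces a permutation $\bar\sigma_m$ of $H_n$, and each $j$ carries a sign $\varepsilon'(j):=\sgn\langle mj\rangle$ with $\#\{j:\varepsilon'(j)=-1\}=\#G_n$. The general fact I would use is that, for a permutation commuting with a fixed-point-free involution, running around a cycle of $\bar\sigma_m$ of length $\ell$ the product of the signs $\varepsilon'$ controls the lift: product $+1$ means the cycle lifts to two $\sigma_m$-cycles of length $\ell$ (signature contribution $+1$), while product $-1$ means it lifts to a single $\sigma_m$-cycle of length $2\ell$ (contribution $-1$). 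Hence $\sgn(\sigma_m)=(-1)^{M}$, where $M$ is the number of cycles carrying an odd number of negative signs, and $M\equiv \#G_n\pmod 2$ because the signs are partitioned among the cycles. This yields $\sgn(\sigma_m)=(-1)^{\#G_n}$.

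It then remains to show $\sgn(\sigma_m)=\jac{m}{n}$. Under $\mathbb{Z}/n_1n_2\mathbb{Z}\cong \mathbb{Z}/n_1\mathbb{Z}\times\mathbb{Z}/n_2\mathbb{Z}$ for coprime $n_1,n_2$, the permutation $\sigma_m$ is the product $\tau_1\times\tau_2$ of the multiplication maps, and the signature of a product permutation on $X_1\times X_2$ is $\sgn(\tau_1)^{|X_2|}\sgn(\tau_2)^{|X_1|}$; as $n_1,n_2$ are odd this shows $\sgn(\sigma_m)$ is multiplicative in $n$. Since the Jacobi symbol is multiplicative in $n$ by definition, it suffices to treat $n=p^a$. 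Here I would stratify $\mathbb{Z}/p^a\mathbb{Z}$ by $p$-adic valuation: $\sigma_m$ preserves each stratum $\{x:v_p(x)=k\}=p^k(\mathbb{Z}/p^{a-k}\mathbb{Z})^\times$ and acts there as multiplication by $m$ on the cyclic group $(\mathbb{Z}/p^{a-k}\mathbb{Z})^\times$. Writing such a cyclic group additively, multiplication by $m$ is translation by the discrete logarithm $d$ of $m$, whose signature is $(-1)^{\gcd(N,d)}$ with $N$ the group order; since $N$ is even this equals $+1$ or $-1$ according as $d$ is even or odd, that is, according as $m$ is or is not a square modulo $p$, so it equals $\jac{m}{p}$. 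Multiplying over the $a$ strata gives $\sgn(\sigma_m)=\jac{m}{p}^{a}=\jac{m}{p^a}$, completing the prime-power case.

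The main obstacle is the first claim, $(-1)^{\#G_n}=\sgn(\sigma_m)$: this is the step that trades the non-multiplicative count $\#G_n$ for a quantity that factors cleanly through the Chinese remainder theorem, and the delicate point is getting the involution bookkeeping right, namely the split-versus-merge dichotomy for cycles of $\bar\sigma_m$ and its precise effect on the signature. I would emphasize that one cannot shortcut the composite case via Euler's criterion and the usual product argument for Gauss's lemma, since for composite $n$ the factor $\bigl(\tfrac{n-1}{2}\bigr)!$ need not be invertible modulo $n$, so the cancellation that proves the prime case is unavailable; this is exactly why passing through the multiplicative signature is the natural route.
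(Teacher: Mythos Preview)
Your proof is correct but follows a genuinely different route from the paper. The paper proceeds by a short induction on $n$: writing $n=pn'$ with $p$ prime, it splits $H_n$ into the multiples of $p$ (which reduce to $H_{n'}$ and are handled by the induction hypothesis) and the non-multiples, where a direct product computation modulo $p$ together with Euler's criterion gives $(-1)^{\#G''_n}=\jac{m}{p}$. This is elementary and entirely self-contained in a few lines.

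Your argument is the Zolotarev-type approach: you first identify $(-1)^{\#G_n}$ with the signature of the multiplication-by-$m$ permutation $\sigma_m$ on $\mathbb{Z}/n\mathbb{Z}$ via the involution/cycle-lifting analysis, then exploit the multiplicativity of $\sgn(\sigma_m)$ under the Chinese remainder theorem, and finally evaluate the prime-power case by stratifying $\mathbb{Z}/p^a\mathbb{Z}$ by $p$-adic valuation and using the cyclic structure of each $(\mathbb{Z}/p^{a-k}\mathbb{Z})^\times$. The bookkeeping is right: the split/merge dichotomy for cycles of $\bar\sigma_m$ gives $\sgn(\sigma_m)=(-1)^M$ with $M\equiv\#G_n\pmod 2$, and on each stratum the translation by the discrete logarithm $d$ has signature $(-1)^{N-\gcd(N,d)}=(-1)^{\gcd(N,d)}$ since $N=p^{a-k-1}(p-1)$ is even, which indeed detects the parity of $d$ and hence $\jac{m}{p}$.

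What each approach buys: the paper's induction is shorter and needs no structure theory beyond Euler's criterion. Your route is more conceptual---it explains \emph{why} the generalized Gauss count is multiplicative in $n$ (because permutation signatures are), and it connects the lemma to Zolotarev's viewpoint, which is valuable context. Your remark that the naive Euler-style cancellation fails for composite $n$ because $\bigl(\tfrac{n-1}{2}\bigr)!$ need not be a unit is well taken, but note that the paper sidesteps this by only ever cancelling $\prod_{i\in H''_n}i$ modulo the \emph{prime} $p$, where it is invertible.
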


\begin{proof}
    We prove this by induction on $n$.
    For $n=1$, the claim is trivial.
    Let $n\geq 2$ and write $n=pn'$ where $p$ is a prime number.
    Since $n$ is odd, $p$ is also odd.
    We split $H_n$ into two parts:
    $$
        H'_n = H_n \cap p\mathbb{Z}/n\mathbb{Z}, \quad H''_n = H_n \setminus H'_n.
    $$
    Accordingly, we set $G'_n=G_n\cap H'_n$ and $G''_n=G_n\cap H''_n$.
    The canonical isomorphism $p\mathbb{Z}/n\mathbb{Z}\cong \mathbb{Z}/n'\mathbb{Z}$ identifies $H'_n$ (resp. $G'_n$) with $H_{n'}$ (resp. $G_{n'}$).
    Therefore the induction hypothesis shows that $(-1)^{\#G'_n} = \jac{m}{n'}$.
    Let $x = \prod_{i\in H''_n}i$.
    Then we have
    $$
        \prod_{i\in H''_n} mi = m^{\frac{n-n'}{2}}x\equiv (m^{\frac{p-1}{2}})^{n'}x \equiv \jac{m}{p}x \pmod p.
    $$
    On the other hand, we have $mH''_n = \{\varepsilon(i)i\mid i\in H''_n\}$, so we get
    $$
        \prod_{i\in H''_n} mi = \prod_{i\in H''_n}\varepsilon(i)i = (-1)^{\#G''_n}x.
    $$
    This shows that $(-1)^{\#G''_n} = \jac{m}{p}$.
    Combining these results, we get
    $$
        (-1)^{\#G_n} = (-1)^{\#G'_n}(-1)^{\#G''_n} = \jac{m}{p}\jac{m}{n'} = \jac{m}{n}.
    $$
\end{proof}

Now we can prove our main theorem:

\begin{proof}[Proof of \Cref{main}]
    By \Cref{kasteleyn_det}, we have
    $$
        \sum_{D\in \D(R_{m-1,n-1})} \sqrt{-1}^{h(D)} =
        \begin{cases}
            \det K & (m\equiv 1 \pmod 2)\\
            (-1)^{\frac{n^2-1}{8}}\cdot \det K & (m\equiv 0 \pmod 2).
        \end{cases}
    $$
    The second supplementary law for the Jacobi symbol shows that
    $$
        (-1)^{\frac{n^2-1}{8}} = \jac{2}{n},
    $$
    so the theorem is equivalent to $\det K=\jac{m}{n}$.
    By \Cref{detK_vs_norm}, we have $\det K=\Nm_{A_m^+\otimes_\mathbb{C} B_n^+/\mathbb{C}}(\alpha)$.
    \Cref{norm_computation}, \Cref{product_computation}, and \Cref{Gauss_lemma} shows that $\det K$ is of the form $\zeta_m^M\jac{m}{n}$ for some $M\in \mathbb{Z}$.
    Since $\det K$ is an integer, we must have $\det K=\jac{m}{n}$.
\end{proof}

\section{Combinatorial proof}
In this section, we present an elementary combinatorial proof of this theorem.
This proof is completely independent from the algebraic proof.
The idea is to decompose the board so that it is almost symmetric, and compute the left hand side of the theorem using involution.

\begin{definition}
Let $X$ be a board.
We define $s(D)=\sqrt{-1}^{h(D)}$ for $D \in \D(X)$, and set
$$
    S(X)=\sum_{D \in \D(X)} {s(D)}.
$$
\end{definition}

\begin{definition}
For positive integers $(a,b)$, we define $L(a,b)\subset\mathbb{Z}\times \mathbb{Z}$ by
$$
    L(a,b)=([a]\times [1]) \cup ([1] \times [b])
$$
We define $L(a,b)=\emptyset$ if $a$ or $b$ is $0$. 

For non-negative integers $a_1,\ldots, a_n$ and $b_1,\ldots, b_n$, we define a subset $L(a_1,\ldots a_n;b_1, \ldots, b_n)\subset \mathbb{Z}\times\mathbb{Z}$ as follows:
$$
    L(a_1,\ldots a_n;b_1, \ldots, b_n) = \bigcup_{k=1}^n ((k-1,k-1)+L(a_k,b_k)).
$$
\end{definition} 

Informally, $L(a,b)$ describes an L-shaped board, which has $a$ for width and $b$ for height, and $L(a_1,\ldots, a_n; b_1,\ldots, b_n)$ is structured by placing $L(a_1,b_1), L(a_2,b_2),\ldots, L(a_n,b_n)$ in the diagonal of $\mathbb{Z}_{>0}\times\mathbb{Z}_{>0}$ in this order. 

The next lemma is the key in our proof; this computes $S(X)$ for an almost symmetric board $X$.

\begin{lemma}\label{sum_for_symmetric_board}
    Let $a_1,\ldots, a_n, b_1,\ldots, b_n$ be non-negative integers.
    If $\lvert a_k-b_k \rvert \leq 1$ for every $k$, then 
    $$ S(L(a_1,\ldots, a_n;b_1, \ldots, b_n))=
    \begin{cases}
        0 & (a_k=b_k\neq 0 \text{ for some } k), \\
        \prod_{k=1}^n \sqrt{-1}^{\lfloor{a_k/2}\rfloor} & (\text{otherwise}).\\
    \end{cases}
    $$
\end{lemma}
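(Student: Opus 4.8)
The plan is to induct on the number $n$ of L-shapes, peeling off the first one $L(a_1,b_1)$ at each step; the base case is the empty board, whose unique (empty) tiling gives $S=1$, matching the empty product. Throughout I would exploit that the board is \emph{almost symmetric}: its reflection across the main diagonal is $L(b_1,\dots,b_n;a_1,\dots,a_n)$, so transposition $D\mapsto D^{T}$ interchanges horizontal and vertical dominoes and differs from a genuine symmetry of $X$ only at the squares where $a_k\neq b_k$. Since $|a_k-b_k|\le 1$ forces $(a_1,b_1)$ to be empty (some entry $0$), symmetric $(c,c)$, or asymmetric $(c{+}1,c)$ or $(c,c{+}1)$, the induction splits into three per-shape statements: an empty first shape gives $S(X)=S(X')$ with factor $\sqrt{-1}^{\lfloor a_1/2\rfloor}=1$; an asymmetric first shape should give $S(X)=\sqrt{-1}^{\lfloor a_1/2\rfloor}S(X')$; and a symmetric nonempty first shape should give $S(X)=0$, where $X'=L(a_2,\dots,a_n;b_2,\dots,b_n)$.

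For the vanishing statements I would first extract everything forced by parity. Color $\mathbb{Z}\times\mathbb{Z}$ like a checkerboard. A direct count shows that an asymmetric L-shape is color-balanced, whereas a symmetric shape $L(c,c)$, whose corner lies on the diagonal, has a color imbalance of exactly $\pm1$, with sign governed by the parity of $c$. Because the diagonal translations $(k-1,k-1)$ preserve colors, the total imbalance of $X$ is the signed number of symmetric shapes; whenever this is nonzero the board cannot be tiled and $S(X)=0$ trivially. This disposes of an odd number of symmetric shapes and of same-parity pairs, leaving only the case of an even family of symmetric shapes whose imbalances cancel.

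In that remaining case genuine tilings exist and I would produce a \emph{sign-reversing} involution. The key arithmetic is that a single flip changes $h(D)$ by $2$ and hence multiplies $s(D)=\sqrt{-1}^{h(D)}$ by $\sqrt{-1}^{\pm2}=-1$; so any fixed-point-free involution built from one canonically chosen flip forces the sum to vanish. Guided by the almost-symmetry, the natural choice is the flip of the $2\times 2$ block at the corner of the first symmetric shape (the smallest index $k_0$ with $a_{k_0}=b_{k_0}\ge 2$), interchanging its two horizontal and two vertical dominoes. When that block is not immediately flippable I would propagate along the symmetric hook, exactly as in the proof of \Cref{flip_transitivity}, to locate a canonical flippable square. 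Verifying that this prescription is everywhere defined, involutive, and fixed-point-free is the delicate point of this case.

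Finally, for the product formula I would carry out the inductive peeling $S(X)=\sqrt{-1}^{\lfloor a_1/2\rfloor}S(X')$. Here lies the main obstacle: shape $1$ does \emph{not} split off geometrically. Adjacent arms of consecutive shapes run alongside one another, and shape $1$ can even merge with shape $2$ into a genuine rectangle, so the tilings of $X$ are \emph{not} products of tilings of $L(a_1,b_1)$ and of $X'$. Consequently the factor $\sqrt{-1}^{\lfloor a_1/2\rfloor}$ cannot be extracted by naive multiplicativity; instead I would build a sign-tracking bijection $\D(X)\to\D(X')$ from the forced dominoes along the non-interacting ends of the arms together with the flip analysis above, and the remaining labor is the bookkeeping that the accumulated power of $\sqrt{-1}$ is exactly $\lfloor a_1/2\rfloor$ — something the small cases $a_1\in\{1,2,3,4\}$ already make plausible. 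Combining the three per-shape statements with the induction then yields the asserted value of $S$ in every case.
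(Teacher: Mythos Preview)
Your plan has two genuine gaps at its load-bearing points. First, the sign-reversing involution for the vanishing case is not constructed: ``propagate along the symmetric hook to locate a canonical flippable square'' does not specify, for each tiling $D$, a particular $2\times 2$ block, nor does it explain why the same block is selected for both $D$ and its image (so that the map is an involution). The argument of \Cref{flip_transitivity} does not transfer, since it relies on the rectangular boundary to force the staircase of dominoes, whereas in $L(a_1,\dots;b_1,\dots)$ dominoes can freely cross between the arms of adjacent L-shapes; your restriction to $a_{k_0}=b_{k_0}\ge 2$ also leaves the case $a_{k_0}=b_{k_0}=1$ unhandled. Second, the ``sign-tracking bijection $\D(X)\to\D(X')$'' cannot be a bijection at all, since the two sets have different cardinalities; what you would actually need is an involution on $\D(X)$ whose fixed points biject with $\D(X')$ carrying the correct weight, and you have not supplied one. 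The checkerboard step is fine as far as it goes, but it only kills the cases where the signed count of symmetric shapes is nonzero, which is not all of them.

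The paper bypasses both difficulties with a single device that you are close to but do not reach. Call a square $(i,j)$ \emph{violating} for $D$ if its domino points toward the diagonal (vertical when $i>j$, horizontal when $i<j$); order the squares first by $i+j$ and then by $|i-j|$. For the minimal violating square the paper shows, using $|a_k-b_k|\le 1$, that the entire L-hook between it and the diagonal is forced, and that transposing this hook produces a tiling whose minimal violation is at the mirror square $(j,i)$; this is a fixed-point-free involution changing $h$ by exactly $\pm 2$. The tilings with no violating square are precisely those in which no domino straddles two consecutive $L(a_k,b_k)$, so the surviving sum is literally $\prod_k S(L(a_k,b_k))$, and each factor is read off by inspection (nonzero only when $|a_k-b_k|=1$ or $a_k=b_k=0$, with $\lfloor a_k/2\rfloor$ horizontal dominoes). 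Thus one involution yields both the vanishing and the multiplicativity that your inductive peeling attempts to establish separately.
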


\begin{proof}
    For simplicity, we write $L$ for $L(a_1,\cdots,a_n;b_1,\cdots,b_n)$. 
    Let $D$ be any domino tiling of $L$.
    We call the square $(i,j)$  \emph{violating} if and only if
    \begin{enumerate}
        \item $i>j$ and $\{(i,j),(i,j+1)\}\in D$, or
        \item $i<j$ and $\{(i,j),(i+1,j)\}\in D$. 
    \end{enumerate}
    We define a preorder on $\mathbb{Z}_{>0}\times\mathbb{Z}_{>0}$ as follows:
    we set $(i,j)\leq (k,l)$ if and only if
    \begin{enumerate}
        \item $i+j < k+l$ , or
        \item $i+j = k+l$ and $\lvert{j-i}\rvert \leq \lvert{k-l}\rvert$. 
    \end{enumerate}
    Write $(i,j)\sim(k,l)$ if $(i,j)\leq (k,l)$ and $(k,l) \leq (i,j)$; this is equivalent to $(i,j)\in \{(k,l),(l,k)\}$.
    For example, we have
    $$
        (1,1)<(1,2)\sim(2,1)<(2,2)<(1,3)\sim(3,1)<(2,3)\sim(3,2)<(1,4)\sim(4,1)<(3,3)\cdots.
    $$ 
    
    Let $\D_{i,j} (\subset \D(L))$ be the set of domino tilings of $L$ whose minimal violating square (in the order defined earlier) is $(i,j)$.
    We will prove that $\D_{i,j}\cap \D_{j,i}=\emptyset$, and that there exists a bijection $\iota:\D_{i,j} \to \D_{j,i}$ which satisfies $h(\iota(D))=h(D)\pm 2$.
    
    Let $D\in \D_{i,j}$ where $i>j$.
    We first assume that $i-j$ is even.
    As a result of the minimality of the violating square, we have $\{(i-1,j),(i-2,j)\}\in D$.
    If $i-3>j$, then we also have $\{(i-3,j),(i-4,j)\}\in D$.
    Repeating this argument, we see that
    $$
        \{(x,j),(x+1,j)\}\in D\quad (j\leq x\leq i-2,\;x\equiv i\pmod 2).
    $$
    The same argument shows that
    $$
        \{(x,j+1),(x+1,j+1)\}\in D\quad (j+2\leq x\leq i-2,\;x\equiv i\pmod 2).
    $$
    \begin{center}
    \begin{tikzpicture}[scale = 0.5]
    % 1つの2段長方形を描画する
    \newcommand{\stackedrectangles}{
        \draw (0,0) rectangle (2,1);  % 下の長方形
        \draw (0,1) rectangle (2,2);  % 上の長方形
    }

    % 3つの長方形を横に並べる
    \draw (0,0) rectangle (2,1);
    \node at (0.5,-0.5) {$j$};
    \node at (-0.5,0.5) {$j$};
    \node at (-0.5,1.5) {$j+1$};
    \begin{scope}[shift={(2,0)}]
        \stackedrectangles
    \end{scope}
    \begin{scope}[shift={(4,0)}]
        \stackedrectangles
    \end{scope}
    \begin{scope}[shift={(6,0)}]
        \draw (0,0) rectangle (1,2);
        \node at (0.5,-0.5) {$i$};
    \end{scope}
    \end{tikzpicture}
    \end{center}
    
    Now, again by the minimality of the violating square, we have $\{(j,j+1), (j,j+2)\}\in D$.
    If $i>j+3$, then we also have $\{(j,j+3), (j,j+4)\}$.
    Repeating this argument, we see that
    $$
        \{(j,y),(j,y+1)\}\in D\quad (j+1\leq y\leq i-1,\;y\not\equiv i\pmod 2).
    $$
    The same argument shows that
    $$
        \{(j+1,y),(j+1,y+1)\}\in D\quad (j+1\leq y\leq i-1,\;y\not\equiv i\pmod 2),
    $$
    noting that we have $(j+1,i-1)<(i,j)$.
    Here, we used the condition $|a_i-b_i|, |a_{i+1}-b_{i+1}|\leq 1$ to ensure that $(j,i-1),(j+1,i-1)$ are in $L$. This proves $D\not \in \D_{j,i}$ and hence $\D_{j,i}\cap\D_{i,j}=\emptyset$. 

    \begin{center}
    \begin{tikzpicture}[scale = 0.5]
    % 1つの2段長方形を描画する
    \newcommand{\stackedrectangles}{
        \draw (0,0) rectangle (2,1);  % 下の長方形
        \draw (0,1) rectangle (2,2);  % 上の長方形
    }
    % 1つの2列長方形を描画する
    \newcommand{\alignedrectangles}{
        \draw (0,0) rectangle (1,2);  % 下の長方形
        \draw (1,0) rectangle (2,2);  % 上の長方形
    }

    % 3つの長方形を横に並べる
    \draw (0,0) rectangle (2,1);
    \node at (0.5,-0.5) {$j$};
    \node at (-0.5,0.5) {$j$};
    \begin{scope}[shift={(2,0)}]
        \stackedrectangles
    \end{scope}
    \begin{scope}[shift={(4,0)}]
        \stackedrectangles
    \end{scope}
    \begin{scope}[shift={(6,0)}]
        \draw (0,0) rectangle (1,2);
        \node at (0.5,-0.5) {$i$};
    \end{scope}
    \begin{scope}[shift={(0,1)}]
        \alignedrectangles
    \end{scope}
    \begin{scope}[shift={(0,3)}]
        \alignedrectangles
    \end{scope}
    \begin{scope}[shift={(0,5)}]
        \alignedrectangles
        \node at (-0.5,1.5) {$i$};
    \end{scope}
    \end{tikzpicture}
    \end{center}

    When $i-j$ is odd, a similar argument shows that
    \begin{align*}
        \{(x,j),(x+1,j)\}\in D\quad &(j+1\leq x\leq i-2,\;x\equiv i\pmod 2),\\
        \{(x,j+1),(x+1,j+1)\}\in D\quad &(j+1\leq x\leq i-2,\;x\equiv i\pmod 2),\\
        \{(j,y),(j,y+1)\in D\quad &(j\leq y\leq i-1,\;y\not \equiv i\pmod 2),\\
        \{(j+1,y),(j+1,y+1)\in D\quad &(j+2\leq y\leq i-1,\;y\not \equiv i\pmod 2).
    \end{align*}
    This proves $D\not \in \D_{j,i}$ and hence $\D_{j,i}\cap\D_{i,j}=\emptyset$.

    \begin{center}
    \begin{tikzpicture}[scale = 0.5]
    % 1つの2段長方形を描画する
    \newcommand{\stackedrectangles}{
        \draw (0,0) rectangle (2,1);  % 下の長方形
        \draw (0,1) rectangle (2,2);  % 上の長方形
    }
    % 1つの2列長方形を描画する
    \newcommand{\alignedrectangles}{
        \draw (0,0) rectangle (1,2);  % 下の長方形
        \draw (1,0) rectangle (2,2);  % 上の長方形
    }

    % 3つの長方形を横に並べる
    \begin{scope}[shift={(1,0)}]
        \stackedrectangles
    \end{scope}
    \node at (0.5,-0.5) {$j$};
    \node at (-0.5,0.5) {$j$};
    \begin{scope}[shift={(3,0)}]
        \stackedrectangles
    \end{scope}
    \begin{scope}[shift={(5,0)}]
        \draw (0,0) rectangle (1,2);
        \node at (0.5,-0.5) {$i$};
    \end{scope}
    \begin{scope}[shift={(0,0)}]
        \draw (0,0) rectangle (1,2);
    \end{scope}
    \begin{scope}[shift={(0,2)}]
        \alignedrectangles
    \end{scope}
    \begin{scope}[shift={(0,4)}]
        \alignedrectangles
        \node at (-0.5,1.5) {$i$};
    \end{scope}
    \end{tikzpicture}
    \end{center}

    Next we define a bijection $\iota\colon \D_{i,j}\to \D_{j,i}$.
    For $i>j$, we define $\iota\colon \D_{i,j}\to \D_{j,i}$ by transposing the L-shaped region we have drawn above.
    We also define $\iota\colon \D_{j,i}\to \D_{i,j}$ symmetrically.
    By definition, we have $\iota\circ\iota = \id$, so $\iota$ is a bijection.
    Suppose that $D\in \D_{i,j}$.
    By the transposing procedure, $i-j+1$ vertical dominoes will become horizontal, and $i-j-1$ horizontal dominoes will become vertical.
    Therefore we have $h(\iota(D))=h(D)\pm2$. 

    Let $\D'(L)=\D(L)\setminus \bigcup_{i,j} \D_{i,j}$.
    The argument above yields $S(L)=\sum_{D\in \D'(L)} s(D)$. 
    If $D\in \D'(L)$, then for all $k$, there is no domino in $D$ which intersects with both $(k-1,k-1)+L(a_k,b_k)$ and $(k,k)+L(a_{k+1},b_{k+1})$. 
    Therefore we have $S(L)=\prod_{k=1}^n S(L(a_k,b_k))$.
    A domino tiling of $L(a_k,b_k)$ exists if and only if $\lvert a_k-b_k \rvert =1$ or $a_k=b_k=0$, and in this case, there are $\lfloor a_k/2 \rfloor$ horizontal dominoes. This completes the proof.
\end{proof}

Next we want to divide the board to use \Cref{sum_for_symmetric_board}.
For this, we need some definition. 

\begin{definition}
    Let $X$ be a board, $T$ be a subset of $X$, and $D$ be a domino tiling of $X$. We say that $T$ is \emph{closed} for $D$ if there is no domino which intersects with both $T$ and $X\setminus T$. 
    Also, we define $\Cl_D(T)$ as a minimal closed subset for $D$ containing $T$.
    We write $\Cl(T)$ for the union of all $\Cl_D(T)$ for $D\in D_X$. 
    We define $S(T;X)=\sum_{D\in \D(X): \Cl_D(T)=X} s(D)$. 
\end{definition}
The next lemma is almost straightforward by definition.
\begin{lemma}\label{decomposition_of_board}
    Let $X$ be a board and $T$ be a subset of $X$. 
    Then the following equality holds:
    $$
        S(X)=\sum_{U:T\subset U\subset \Cl(T)}S(X\setminus U)S(T;U).
    $$
\end{lemma}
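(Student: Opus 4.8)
The plan is to partition the set $\D(X)$ of all domino tilings according to the value of $\Cl_D(T)$ and to evaluate the contribution of each class as a product. First I would observe that for every $D\in\D(X)$ the set $\Cl_D(T)$ is a well-defined subset of $X$ satisfying $T\subset\Cl_D(T)\subset\Cl(T)$: the left inclusion is by definition, and the right inclusion holds because $\Cl(T)$ is the union of all $\Cl_D(T)$. Hence the assignment $D\mapsto\Cl_D(T)$ sorts $\D(X)$ into disjoint classes indexed precisely by the subsets $U$ with $T\subset U\subset\Cl(T)$, and I can rewrite $S(X)=\sum_U\sum_{D:\,\Cl_D(T)=U}s(D)$.

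The core of the argument is to identify the inner sum with $S(X\setminus U)\,S(T;U)$. I would fix $U$ with $T\subset U\subset\Cl(T)$ and analyze the tilings $D$ with $\Cl_D(T)=U$. Since such a $U$ is in particular closed for $D$, no domino of $D$ meets both $U$ and $X\setminus U$, so $D$ is the disjoint union of a tiling $D_1$ of $U$ and a tiling $D_2$ of $X\setminus U$. The key point I would establish is that this restriction is a bijection onto the pairs $(D_1,D_2)$ with $D_1\in\D(U)$, $\Cl_{D_1}(T)=U$ (closure computed \emph{inside} the board $U$), and $D_2\in\D(X\setminus U)$ arbitrary. For this I would verify that, because $U$ is closed for $D$, a subset $V\subset U$ is closed for $D$ in $X$ if and only if it is closed for $D_1$ in $U$: any domino lying in $X\setminus U$ is disjoint from $V$, while any domino lying in $U$ meets both $V$ and $X\setminus V$ exactly when it meets both $V$ and $U\setminus V$. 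Consequently $\Cl_D(T)$, computed in $X$, coincides with $\Cl_{D_1}(T)$, computed in $U$, so the condition $\Cl_D(T)=U$ translates exactly into $\Cl_{D_1}(T)=U$ with $D_2$ unconstrained.

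With the bijection in hand, the computation is immediate: $h$ is additive over the two pieces, so $s(D)=s(D_1)s(D_2)$, and the inner sum factors as
\[
\sum_{D:\,\Cl_D(T)=U}s(D)=\Bigl(\sum_{\substack{D_1\in\D(U)\\ \Cl_{D_1}(T)=U}}s(D_1)\Bigr)\Bigl(\sum_{D_2\in\D(X\setminus U)}s(D_2)\Bigr)=S(T;U)\,S(X\setminus U).
\]
Summing over all admissible $U$ then yields the claimed identity; terms for which $X\setminus U$ admits no tiling, or for which no $D_1$ satisfies $\Cl_{D_1}(T)=U$, simply contribute an empty (hence zero) factor and cause no trouble.

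The step I expect to demand the most care is the equivalence of the two notions of closedness and the resulting equality $\Cl_D(T)=\Cl_{D_1}(T)$: everything downstream is routine bookkeeping, but the whole decomposition hinges on the fact that the closure of $T$ never ``leaves'' $U$ once $U$ is known to be closed. It is worth recording explicitly that $\Cl_D(T)$ is nothing but the union of those dominoes of $D$ that meet $T$, which makes both its well-definedness and this stability property transparent.
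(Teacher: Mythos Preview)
Your proposal is correct and follows exactly the same route as the paper: partition $\D(X)$ by the value of $\Cl_D(T)$, then factor each class as a product over $U$ and $X\setminus U$. The paper's proof is a terse three-line display of equalities, whereas you have spelled out the bijection and the equality $\Cl_D(T)=\Cl_{D_1}(T)$ that the paper leaves implicit; your added remark that $\Cl_D(T)$ is simply the union of the dominoes of $D$ meeting $T$ is correct and makes this transparent.
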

\begin{proof}
    We have
    \begin{align*}
        \sum_{D\in \D(X)}{s(D)}&{}=\sum_{U:T\subset U\subset \Cl(T)}{\biggl(\sum_{D\in \D(X):\Cl_D(T)=U}{s(D)}\biggr)}\\
        &{}=\sum_{U:T\subset U\subset \Cl(T)}\biggl(S(X\setminus U)\sum_{D\in \D(U):\Cl_D(T)=U} s(D)\biggr)\\
        &{}=\sum_{U:T\subset U\subset \Cl(T)}S(X\setminus U)S(T;U).
    \end{align*}
\end{proof}

Using the lemmas above, we can prove the periodicity of $S(R_{\bullet,n})$. 
\begin{proposition}\label{periodic_of_S}
For positive integers $m,n$, we have
$$
    S(R_{m+n+1,n})=S(R_{m,n})\cdot 
    \begin{cases}
    \sqrt{-1}^{\frac{n^2+2n}{4}} & (n\equiv 0\pmod{2}), \\
    \sqrt{-1}^{\frac{n^2+2n+1}{4}} & (n\equiv 1\pmod{2}).
    \end{cases}
$$
\end{proposition}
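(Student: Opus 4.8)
The plan is to peel off the leftmost $n+1$ columns of $R_{m+n+1,n}$ as a single \emph{almost-symmetric} block and then use \Cref{decomposition_of_board} to separate this block from the remaining $R_{m,n}$. Write $X=R_{m+n+1,n}=[m+n+1]\times[n]$ and let $W=[n+1]\times[n]$ be the leftmost $n+1$ columns, so that $X\setminus W=[n+2,m+n+1]\times[n]$ is a translate of $R_{m,n}$ and $S(X\setminus W)=S(R_{m,n})$. The first observation is that $W$ is an almost-symmetric L-board: taking $a_k=n+2-k$ and $b_k=n+1-k$ for $1\le k\le n$, a direct check (the $k$-th horizontal arm is row $k$ in columns $k,\dots,n+1$ and the $k$-th vertical arm is column $k$ in rows $k,\dots,n$) shows that $L(a_1,\dots,a_n;b_1,\dots,b_n)=[n+1]\times[n]=W$, and here $a_k-b_k=1$ for every $k$, so $\lvert a_k-b_k\rvert\le 1$ with no index satisfying $a_k=b_k\neq 0$.

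Applying \Cref{sum_for_symmetric_board} to $W$ then gives $S(W)=\prod_{k=1}^n \sqrt{-1}^{\lfloor a_k/2\rfloor}=\sqrt{-1}^{\,\sum_{k=1}^n\lfloor (n+2-k)/2\rfloor}$. Reindexing by $l=n+2-k$ rewrites the exponent as $\sum_{l=2}^{n+1}\lfloor l/2\rfloor=\sum_{l=1}^{n+1}\lfloor l/2\rfloor=\lfloor (n+1)^2/4\rfloor$, and a parity check shows that $\lfloor (n+1)^2/4\rfloor$ equals $\tfrac{n^2+2n}{4}$ when $n$ is even and $\tfrac{n^2+2n+1}{4}$ when $n$ is odd. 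Hence $S(W)$ is exactly the constant $c_n$ appearing on the right-hand side of the statement. This is the step that produces the claimed factor, and it is the part of the argument I am most confident about.

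Next I would apply \Cref{decomposition_of_board} with $T=W$. Since $\Cl_D(W)=W$ for every tiling $D$ of $X$ in which no domino crosses the vertical cut between columns $n+1$ and $n+2$, the summand indexed by $U=W$ equals $S(X\setminus W)\cdot S(W;W)=S(R_{m,n})\cdot S(W)=c_n\,S(R_{m,n})$, using that $\Cl_D(W)=W$ holds automatically for every tiling of the board $W$, so that $S(W;W)=S(W)$. It then remains to show that the remaining summands, indexed by closed sets $U$ with $W\subsetneq U\subseteq \Cl(W)$, add up to zero; these are precisely the contributions of the tilings that possess at least one domino crossing the cut.

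The vanishing of this correction is the main obstacle. The natural tool is the transpose involution underlying the proof of \Cref{sum_for_symmetric_board}: I would look for a sign-reversing involution on the set of tilings of $X$ having a crossing domino, pairing each such tiling with one whose number of horizontal dominoes differs by $2$, so that the two contributions to $S$ cancel. I expect this to require a ``minimal violating configuration'' analysis analogous to the one in \Cref{sum_for_symmetric_board}, now adapted to the straight right boundary of $W$ rather than to a diagonal boundary, and the identity $S([n]\times[n])=0$ (the $a_k=b_k$ case of \Cref{sum_for_symmetric_board}) is likely to organize the bookkeeping. Checking that the proposed pairing is well defined, is genuinely an involution, and exhausts all crossing tilings is where the real work lies.
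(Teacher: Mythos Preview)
Your setup is essentially the paper's, with a cosmetically different cut: the paper takes $T=R_{n,n}$ and shows that in the sum coming from \Cref{decomposition_of_board} only the term with $A=[n]$ survives, whereas you take $W=R_{n+1,n}$ and want only $A=\emptyset$ to survive. Your identification of $W$ as an $L$-board and your computation of $S(W)=c_n$ are correct.

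You have, however, overestimated the difficulty of what you call ``the main obstacle'': no new involution is needed, because the vanishing of the crossing terms is another direct application of \Cref{sum_for_symmetric_board}. Every $U$ with $W\subsetneq U\subseteq\Cl(W)$ has the form $W^+_A:=W\cup(\{n+2\}\times A)$ for some nonempty $A\subseteq[n]$, and the requirement $\Cl_D(W)=W^+_A$ forces $\{(n+1,a),(n+2,a)\}\in D$ for each $a\in A$; hence $S(W;W^+_A)=\sqrt{-1}^{\#A}\,S\bigl(W\setminus(\{n+1\}\times A)\bigr)$. But $W\setminus(\{n+1\}\times A)=L(a_1,\dots,a_n;b_1,\dots,b_n)$ with $b_k=n+1-k$ and $a_k=n+2-k-\chi_A(k)$, so $|a_k-b_k|\le 1$ throughout, and for any $k\in A$ one has $a_k=b_k=n+1-k\neq 0$. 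Thus \Cref{sum_for_symmetric_board} already yields $S(W;W^+_A)=0$ for every nonempty $A$, finishing the proof. This is exactly the mechanism used in the paper.
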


\begin{proof}
    Let $T=R_{n,n}$ and $T^{+}_A=T\cup (\{n+1\}\times A)$, $T^{-}_A=T\setminus (\{n\}\times A)$. 
    By \Cref{decomposition_of_board}, we have
    $$
        S(R_{m+n+1,n})=\sum_{A:A\subset [n]} S(R_{m+n+1,n}\setminus T^{+}_A)S(T;T^{+}_A).
    $$
    Let $D$ be any domino tiling of $T^{+}_A$ satisfying $\Cl_T(D)=T^{+}_A$. 
    Then for all $a\in A$, we have $\{(m,a),(m+1,a)\}\in D$.
    Therefore we have $S(T;T^{+}_A)=\sqrt{-1}^{\#A}S(T^{-}_A)$.
    We can write
    $$
        T^{-}_A=L(n-\chi_A(1),n-1-\chi_A(2),\ldots ,1-\chi_A(n);n,n-1,\ldots,1)
    $$
    where $\chi_A$ denotes the characteristic function for $A$.
    Therefore we can use \Cref{sum_for_symmetric_board} to compute $S(T^{-}_A)$.
    If $A\neq [n]$, then $S(T^{-}_A)=0$ .
    If $A=[n]$, then $S(T^{-}_A)=\sqrt{-1}^{f(n)}$, where $f(n)=\sum_{i=1}^n \lfloor\frac{n-i}{2}\rfloor$. An easy computation shows that
    $f(n)=\frac{n^2-2n}{4}$ for even $n$, and $f(n)=\frac{n^2-2n+1}{4}$ for odd $n$. 
    Therefore we get
    $$
    S(R_{m+n+1,n})=S(R\setminus T^{+}_{[n]})\cdot \sqrt{-1}^nS(T^{-}_{[n]})=    
    S(R_{m,n})\cdot
    \begin{cases}
    \sqrt{-1}^{\frac{n^2+2n}{4}} & (n\equiv 0\pmod{2}), \\
    \sqrt{-1}^{\frac{n^2+2n+1}{4}} & (n\equiv 1\pmod{2}).
    \end{cases}
    $$
\end{proof}

\begin{corollary}\label{not_coprime_implies_zero}
If $m,n$ are not coprime, then $S(R_{m-1,n-1})=0$. 
\end{corollary}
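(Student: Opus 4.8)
The plan is to run a Euclidean-style descent on the pair $(m,n)$, using \Cref{periodic_of_S} to shrink the board in the first coordinate and the transposition symmetry to exchange the two coordinates, until we reach a square board, which is annihilated by \Cref{sum_for_symmetric_board}. Concretely, I would argue by strong induction on $m+n$, assuming $\gcd(m,n)=d>1$ (so $m,n\geq d\geq 2$). The key point is that $d$ is preserved at every step of the descent, so the pair never becomes coprime, and each reduction multiplies $S$ by a nonzero scalar (a power of $\sqrt{-1}$) or applies complex conjugation, neither of which affects vanishing; therefore vanishing of $S$ is transported faithfully through the whole process.

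First I would dispose of the case where $(m-1)(n-1)$ is odd, i.e.\ $m$ and $n$ are both even (equivalently $2\mid d$): then $R_{m-1,n-1}$ has an odd number of squares, admits no domino tiling, and $S(R_{m-1,n-1})=0$ trivially. So assume the area is even. Next, the transposition bijection $\rho\colon \D(R_{m-1,n-1})\xrightarrow{\sim}\D(R_{n-1,m-1})$ interchanges horizontal and vertical dominoes, so $h(\rho(D))=\tfrac{(m-1)(n-1)}{2}-h(D)$, giving $S(R_{n-1,m-1})=\sqrt{-1}^{(m-1)(n-1)/2}\,\overline{S(R_{m-1,n-1})}$; since the prefactor is a unit, $S(R_{m-1,n-1})=0$ if and only if $S(R_{n-1,m-1})=0$. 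This lets me assume $m\geq n$. If $m=n$, the board is the $(n-1)\times(n-1)$ square, and writing $N=n-1\geq 1$ I would use the shell decomposition
$$
    R_{N,N}=L(N,N-1,\dots,1;\,N,N-1,\dots,1),
$$
in which the $k$-th nested L has arms $a_k=b_k=N-k+1$. Since $|a_k-b_k|=0\leq 1$ for all $k$ and $a_1=b_1=N\neq 0$, \Cref{sum_for_symmetric_board} gives $S(R_{N,N})=0$. If instead $m>n$, then $d\mid(m-n)$ forces $m-n\geq d\geq 2$, so I may apply \Cref{periodic_of_S} with second coordinate $n-1$ and $m'=m-n-1\geq 1$ (note $m'+(n-1)+1=m-1$) to obtain $S(R_{m-1,n-1})=(\text{unit})\cdot S(R_{(m-n)-1,n-1})$; the pair $(m-n,n)$ still has gcd $d>1$ and strictly smaller sum, so the induction hypothesis finishes this case.

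The main thing to get right is the bookkeeping that makes the descent well-founded: one must check that both coordinates stay $\geq 2$ (so the reduced instance is again a genuine non-coprime pair), that the indices fed into \Cref{periodic_of_S} are positive, and that, once the both-even case is removed, the descent can terminate only at $m=n$. The one genuinely substantive input is this terminal square case — but it is precisely where \Cref{sum_for_symmetric_board} was designed to apply, since the square decomposes into perfectly symmetric nested L-shapes with $a_k=b_k$, triggering the vanishing clause of that lemma. The only further care needed is that the transposition identity requires even area, which is exactly why the both-even case is separated out at the start; everything else is routine verification.
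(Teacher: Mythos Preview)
Your proposal is correct and follows essentially the same route as the paper: a Euclidean descent using \Cref{periodic_of_S} to reduce $(m,n)$ to $(m-n,n)$, transposition to arrange $m\geq n$, and termination at the square case via \Cref{sum_for_symmetric_board}. The only cosmetic differences are that the paper inducts on $\max(m,n)$ rather than $m+n$ and leaves implicit the edge cases (both-even, positivity of $m-n-1$, the exact transposition identity) that you spell out.
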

\begin{proof}
    We proceed by induction on $\max(m,n)$. By transposing, we may assume that $m\geq n$. If $m>n$, then using \Cref{periodic_of_S} and the induction hypothesis, we deduce that $S(R_{m-1,n-1})=\sqrt{-1}^{\bullet}S(R_{m-n-1,n-1})=0$. If $m=n$, then the claim follows from \Cref{sum_for_symmetric_board}.
\end{proof}

If we use the reciprocity theorem for the Jacobi symbol, then the same induction proves our main theorem. However, by dividing the board in some intricate way, we can prove our main theorem without using the quadratic reciprocity, which yields a new proof of the reciprocity.

\

From now on, we fix positive odd integers  $m,n$ satisfying $m>n$.
\begin{definition}
    We define $Y=\{(i,j)\in R_{m-1,n-1}\mid i+j < \frac{m+n}{2}\}$. 
    For any $A\subset [n-1]$, we set
    $Y_A = Y\cup \{(\tfrac{m+n}{2}-a,a)|a\in A\}$. 
\end{definition}

\begin{center}
    \begin{tikzpicture}[scale = 0.5]
    \draw (0,0) rectangle (8,4);
    \draw (5,0) -- (5,1) -- (4,1) -- (4,2) -- (3,2) -- (3,3) -- (2,3) -- (2,4);
    \draw [thick] (6,0) -- (6,1) -- (5,1) -- (5,2) -- (4,2) -- (4,3) -- (3,3) -- (2,3) -- (2,4) -- (0,4) -- (0,0) -- (6,0);
    \node at (1.5,1.5) {$Y$};
    \node at (5.5,2.5) {$Y_{\{1,2,3\}}$};
    \node foreach \x in {1,2,3,4,5,6,7,8} at (\x-0.5,-0.5) {$\x$};
    \node foreach \y in {1,2,3,4} at (-0.5,\y-0.5) {$\y$};
    \end{tikzpicture}
\end{center}

\begin{proposition}\label{decomposition_to_Y}
    We have
    $$
        S(R_{m-1,n-1})=\sum_{A:A\subset[n-1]} S(Y_{n-A^c})S(Y_A).
    $$
    Here, we write $A^c=[n-1]\setminus A$ and $n-B=\{n-i\mid i\in B\}$. 
\end{proposition}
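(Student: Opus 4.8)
The plan is to apply \Cref{decomposition_of_board} with $X=R_{m-1,n-1}$ and $T=Y$, and then to identify the two resulting factors. The relevant geometry is governed by the anti-diagonal $\{(i,j)\in R_{m-1,n-1}\mid i+j=\tfrac{m+n}{2}\}$, which (because $m>n$ are odd) consists exactly of the squares $(\tfrac{m+n}{2}-a,a)$ for $a\in[n-1]$. Since a domino changes the quantity $i+j$ by exactly $1$, no domino can join a square of $Y$ (where $i+j\leq \tfrac{m+n}{2}-1$) to a square with $i+j\geq \tfrac{m+n}{2}+1$. Hence, for any tiling $D$, the set $\Cl_D(Y)$ is obtained from $Y$ by adjoining precisely those anti-diagonal squares whose covering domino reaches down or left into $Y$; in particular $\Cl_D(Y)=Y_A$ for some $A\subseteq[n-1]$, and every $U$ with $Y\subseteq U\subseteq \Cl(Y)$ has the form $Y_A$. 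Thus \Cref{decomposition_of_board} becomes
$$
    S(R_{m-1,n-1})=\sum_{A}S(R_{m-1,n-1}\setminus Y_A)\,S(Y;Y_A),
$$
where $A$ ranges over the subsets of $[n-1]$ with $Y_A\subseteq \Cl(Y)$.

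Next I would prove the two identities $S(Y;Y_A)=S(Y_A)$ and $S(R_{m-1,n-1}\setminus Y_A)=S(Y_{n-A^c})$. For the first, note that inside $Y_A$ each adjoined square $(\tfrac{m+n}{2}-a,a)$ has its upper and right neighbors lying outside $Y_A$, so any of its neighbors that lies in $Y_A$ must lie in $Y$; therefore every tiling of $Y_A$ covers it by a domino reaching into $Y$, which forces $\Cl_D(Y)=Y_A$. The constraint in the definition of $S(Y;Y_A)$ is thus vacuous, giving $S(Y;Y_A)=\sum_{D\in \D(Y_A)}s(D)=S(Y_A)$. For the second, I would use the point reflection $r\colon (i,j)\mapsto (m-i,n-j)$, a symmetry of $R_{m-1,n-1}$ that carries horizontal dominoes to horizontal ones and vertical to vertical, hence induces a bijection of tilings preserving $h(D)$ and so $s(D)$. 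The involution $r$ interchanges $Y$ with $\{i+j>\tfrac{m+n}{2}\}$ and sends the anti-diagonal square indexed by $a$ to the one indexed by $n-a$. Since $R_{m-1,n-1}\setminus Y_A$ is the union of $\{i+j>\tfrac{m+n}{2}\}$ with the anti-diagonal squares indexed by $A^c$, its image under $r$ is exactly $Y_{n-A^c}$, whence $S(R_{m-1,n-1}\setminus Y_A)=S(Y_{n-A^c})$. Substituting both identities into the display gives the asserted formula, up to the range of summation.

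The one point requiring care --- and the main obstacle --- is reconciling the summation range: the claim sums over all $A\subseteq[n-1]$, whereas the decomposition only produces those $A$ with $Y_A\subseteq \Cl(Y)$. I would argue that the missing terms vanish. If $Y_A\not\subseteq \Cl(Y)$, then some adjoined square $(\tfrac{m+n}{2}-a,a)$ with $a\in A$ is connected to $Y$ in no tiling of the whole board. Were both $\D(Y_A)$ and $\D(Y_{n-A^c})$ nonempty, then (using that $r$ identifies $\D(Y_{n-A^c})$ with $\D(R_{m-1,n-1}\setminus Y_A)$) one could glue a tiling of $Y_A$ to one of $R_{m-1,n-1}\setminus Y_A$ to obtain a tiling of $R_{m-1,n-1}$ in which that square is joined to $Y$, a contradiction. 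Hence one of $S(Y_A)$, $S(Y_{n-A^c})$ vanishes for such $A$, so extending the sum to all $A\subseteq[n-1]$ introduces only zero terms and the two expressions agree.
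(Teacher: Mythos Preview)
Your proof is correct and follows the same strategy as the paper's: apply \Cref{decomposition_of_board}, use that no two anti-diagonal squares are adjacent to conclude $S(Y;Y_A)=S(Y_A)$, and use the central symmetry $(i,j)\mapsto(m-i,n-j)$ to identify $R_{m-1,n-1}\setminus Y_A$ with $Y_{n-A^c}$. Your treatment of the summation range is in fact more careful than the paper's, which simply writes the sum over all $A\subset[n-1]$ without further comment.
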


\begin{proof}
    We write $R=R_{m-1,n-1}$.
    Applying \Cref{decomposition_of_board} for $R$ and $Y$, we get
    $$
        S(R)=\sum_{A:A\subset[n-1]} S(R\setminus Y_A)S(Y;Y_A).
    $$
    As there is no pair of squares in $Y_A\setminus Y$ which are adjacent, we have $S(Y;Y_A)=S(Y_A)$.
    We also have
    $$
        R\setminus Y_A=\{(i,j)\in R\mid i+j > \tfrac{m+n}{2}\} \cup \{(\tfrac{m+n}{2}-a,a)\mid a\in A^c\},
    $$
    so the congruence transformation $(i,j)\mapsto(m-i,n-j)$ identifies $R\setminus Y_A$ with $Y_{n-A^c}$. 
    This induces a bijection $\D(R\setminus Y_A)\xrightarrow{\sim} \D(Y_{n-A^c})$ which does not change the number of horizontal dominoes, so we get $S(R\setminus Y_A)=S(Y_{n-A^c})$. 
\end{proof}

%Now, we begin to proof of main theorem. 
%Using \Cref{periodic_of_S} several time, if we need, we could assume $n,m$ is odd and $m\leq n \leq 3m$. Also, by \Cref{not_coprime_implies_zero}, We could assume $(n,m)=1$. 

The next lemma shows that $S(Y_A)$ is $0$ for almost all $A$. 

\begin{lemma}\label{sum_for_half_board}
Suppose that $n<m<3n$.
Then we have $S(Y_A)\in \{0,\pm1,\pm i\}$.
Moreover, we have $S(Y_A)\neq 0$ if and only if $A$ satisfies all of the following conditions:
\begin{itemize}
    \item $\frac{m-n}{2}\in A$.
    \item If $i,j\in [n-1]$, $i+j\equiv \frac{m}{2}\pmod n$, and $i\neq j$, then $\#(\{i,j\}\cap A)= 1$.
    \item If $i\in [n-1]$ and $2i\equiv \frac{m}{2}\pmod n$, then $i\not\in A$.
\end{itemize}
\end{lemma}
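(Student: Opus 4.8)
The plan is to compute $S(Y_A)$ by recognizing $Y_A$ as a staircase board and reducing to \Cref{sum_for_symmetric_board}. The first point is that $Y_A$ is a down-closed subset of $\mathbb{Z}_{>0}\times\mathbb{Z}_{>0}$ (an order ideal): everything strictly below the anti-diagonal $i+j=\frac{m+n}{2}$ belongs to $Y$, and each added square $(\frac{m+n}{2}-a,a)$ sits on that anti-diagonal with all strictly smaller squares already present. Hence $Y_A$ is a Young diagram, and its principal-hook decomposition writes it as $L(a_1,\dots,a_p;b_1,\dots,b_p)$ with $a_k-b_k=\lambda_k-\lambda'_k$ equal to (width of row $k$) minus (height of column $k$). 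A direct computation of these widths and heights shows that $a_k-b_k$ is controlled by how $A$ meets the fibres of the reflection $\iota_0\colon i\mapsto \frac{m+n}{2}-i$ on $\mathbb{Z}/n\mathbb{Z}$: a bump contributes $+1$ to a width or a height exactly according to membership in $A$, while the truncation of column heights at the row bound $j=n-1$ is what folds the index set by $\iota_0$. Because $n<m<3n$, the only anti-diagonals whose index is $\equiv\frac{m+n}{2}\equiv\frac m2\pmod n$ and which meet $Y_A$ are the interior one $i+j=\frac{m-n}{2}$ and the boundary one $i+j=\frac{m+n}{2}$; this is where the hypothesis $m<3n$ enters.

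Next I would bring the board into the range of \Cref{sum_for_symmetric_board}, i.e.\ make all $|a_k-b_k|\le 1$. In the capped regime (small $k$, where column heights are cut off at $n-1$) the arm can exceed the leg by more than one, but there the overhanging part of the diagram is thin and forces a unique chain of dominoes; peeling these off multiplies $S(Y_A)$ by an explicit power of $\sqrt{-1}$ and replaces $Y_A$ by a smaller staircase board that is almost symmetric (or is manifestly untileable, in which case $S(Y_A)=0$). Applying \Cref{sum_for_symmetric_board} to the reduced board then expresses $S(Y_A)$ as either $0$ or a product $\prod_k\sqrt{-1}^{\lfloor a_k/2\rfloor}$ times the forced factor, which is a fourth root of unity; this already yields $S(Y_A)\in\{0,\pm1,\pm\sqrt{-1}\}$.

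Finally I would read off the nonvanishing criterion. By \Cref{sum_for_symmetric_board}, the reduced value is nonzero if and only if after the reduction every hook satisfies $|a_k-b_k|=1$ with no hook having $a_k=b_k\neq0$. Translating through the width and height computation, this says precisely that $A$ is a transversal for $\iota_0$ on $[n-1]$: it must contain $\frac{m-n}{2}$, the unique index whose $\iota_0$-partner is $0$ and hence lies off the board (the first condition); it must contain exactly one element of each genuine two-element orbit $\{i,j\}$ with $i+j\equiv\frac m2\pmod n$ (the second condition); and it must omit the unique fixed point $2i\equiv\frac m2\pmod n$, whose presence would create either a symmetric anti-diagonal tip or a balanced capped hook, i.e.\ a vanishing hook (the third condition). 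This is the same transversal-of-a-reflection bookkeeping that underlies Gauss's lemma, which is consistent with the role this lemma plays in the final computation.

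The step I expect to be the main obstacle is this reduction to the almost-symmetric case: precisely describing the forced chain of dominoes in the overhanging regime, checking that peeling it off leaves a board to which \Cref{sum_for_symmetric_board} applies, and verifying that the resulting arm and leg conditions match the three stated conditions on $A$ exactly (including the degenerate cases where $\frac{m-n}{2}$ is extremal or the anti-diagonal $i+j=\frac{m+n}{2}$ carries the fixed point of $\iota_0$). Once this dictionary is in place, both the value $\{0,\pm1,\pm\sqrt{-1}\}$ and the nonvanishing criterion follow formally from \Cref{sum_for_symmetric_board}.
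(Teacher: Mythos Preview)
Your plan of reducing to \Cref{sum_for_symmetric_board} is the right one, and your reading of the nonvanishing criterion as a transversal condition for the reflection $i\mapsto \frac{m}{2}-i$ on $\mathbb{Z}/n\mathbb{Z}$ is exactly what is going on. But the specific reduction you propose---``peel off a forced chain in the overhang and apply \Cref{sum_for_symmetric_board} to what remains''---does not work as stated. Take $m=13$, $n=5$, $A=\{1,4\}$ (which satisfies all three bullets). Then $Y_A$ has row lengths $8,6,5,5$; its hook parameters are $(a_k,b_k)=(8,4),(5,3),(3,2),(2,1)$, so $|a_1-b_1|=4$ and $|a_2-b_2|=2$, outside the scope of \Cref{sum_for_symmetric_board}. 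The only forced domino is $\{(7,1),(8,1)\}$; after removing it, both $(6,1)$ and $(6,2)$ still have two neighbours in the board and nothing further is forced. So there is no forced chain that strips $Y_A$ down to an almost-symmetric $L$-board, and the board you would be left with is not even a Young diagram.

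The paper handles the reduction with \Cref{decomposition_of_board} instead. It cuts at column $n-1$: set $Z=Y_A\cap R_{n-1,n-1}$ and expand $S(Y_A)=\sum_B S(Y_A\setminus Z_B^+)\,S(Z;Z_B^+)$, where $B$ records which squares of column $n$ are matched horizontally back into $Z$; as in \Cref{periodic_of_S} one has $S(Z;Z_B^+)=\sqrt{-1}^{\#B}S(Z_B^-)$. The point is that \emph{both} $Z_B^-$ and $Y_A\setminus Z_B^+$ are now genuine $L$-boards with all $|a_k-b_k|\le 1$, so \Cref{sum_for_symmetric_board} applies to every summand. One reads off that $S(Z_B^-)=0$ unless $\frac{m-n}{2}\in A$, $B=[\frac{m-n}{2}]$, and the pair/fixed-point conditions on $A$ hold for $i+j=\frac{m+n}{2}$; and then $S(Y_A\setminus Z_{[\frac{m-n}{2}]}^+)=0$ unless the analogous conditions hold for $i+j=\frac{m-n}{2}$. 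Together these are exactly the three bullets, and the surviving single term is a product of two fourth roots of unity, giving $S(Y_A)\in\{0,\pm1,\pm i\}$. So the missing ingredient in your plan is not a forced-domino argument but the closure decomposition of \Cref{decomposition_of_board}, which replaces ``peeling'' by a finite sum that collapses to a single term.
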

\begin{proof}
    Let $Z=Y_A\cap R_{n-1,n-1}$.
    First we assume that $\frac{m-n}{2}\not\in A$.
    For $B\subset[\frac{m-n}{2}-1]$, let $Z^{+}_B=Z\cup (\{n\}\times B)$ and $Z^{-}_B=Z\setminus (\{n-1\}\times B)$. 
    Using \Cref{decomposition_of_board}, we get
    $$
        S(Y_A)=\sum_{B:B\subset[\frac{m-n}{2}-1]}S(Y_A\setminus Z^{+}_B)S(Z;Z^{+}_B).
    $$
    As in the proof of \Cref{periodic_of_S}, we have $S(Z;Z^{+}_B)=S(Z^{-}_B)$.
    However, we can write $Z^{-}_B=L(a_1,a_2\cdots,a_k;b_1,b_2,\ldots b_k)$ with $a_{\frac{m-n}{2}}=b_{\frac{m-n}{2}}=\frac{3n-m}{2}>0$.
    Therefore we get $S(Y_A)=0$ by \Cref{sum_for_symmetric_board}.

    Next we assume that $\frac{m-n}{2}\in A$.
    For $B\subset[\frac{m-n}{2}]$, as before, we set $Z^{+}_B=Z\cup (\{n\}\times B)$ and $Z^{-}_B=Z\setminus (\{n-1\}\times B)$. 
    Using \Cref{decomposition_of_board}, we get
    $$
        S(Y_A)=\sum_{B:B\subset[\frac{m-n}{2}]}S(Y_A\setminus Z^{+}_B)S(Z;Z^{+}_B).
    $$
    As in the proof of \Cref{periodic_of_S}, we have $S(Z;Z^{+}_B)=S(Z^{-}_B)$.
    We can write
    $$Z^{-}_B=L(a_1,\ldots,a_k;b_1,\ldots b_k),$$
    where $k=\lfloor \frac{m+n}{4}\rfloor$ and
    $$
        |a_i-b_i|=
        \begin{cases}
            \chi_B(i) & (i\leq  \frac{m-n}{2}) \\
            |\chi_A(i)-\chi_A(\tfrac{m+n}{2}-i)| & (i>\frac{m-n}{2}).
        \end{cases}
    $$
    By \Cref{sum_for_symmetric_board}, $S(Z_B^{-})$ is non-zero if and only if all of the following conditions are true:
    \begin{itemize}
        \item $B=[\frac{m-n}{2}]$.
        \item for any $i,j\in [n-1]$ satisfying $i+j=\tfrac{m+n}{2}$ and $i\neq j$, we have $\#(\{i,j\}\cap A)= 1$.
        \item $\tfrac{m+n}{4}\not\in A$.
    \end{itemize}
    In particular, we have
    $$
        S(Y_A)=S(Y_A\setminus Z^{+}_{[\frac{m-n}{2}]})S(Z^{-}_{[\frac{m-n}{2}]})
    $$
    We can write
    $Y_A\setminus Z^{+}_{[\frac{m-n}{2}]}=(n,0)+L(c_1,\ldots c_l;d_1,\ldots d_l)$,
    where $l=\lfloor \frac{m-n}{4} \rfloor$ and 
    $$
        |c_i-d_i|=|\chi_A(i)-\chi_A(\tfrac{m-n}{2}-i)|.
    $$
    By \Cref{sum_for_symmetric_board}, $S(Y_A\setminus Z^{+}_B)$ is non-zero if and only if the following conditions are true:
    \begin{itemize}
        \item for any $i,j\in [n-1]$ satisfying $i+j=\tfrac{m-n}{2}$ and $i\neq j$, we have $\#(\{i,j\}\cap A)= 1$.
        \item $\frac{m-n}{4}\not\in A$.
    \end{itemize}
    This completes the proof. 
\end{proof}

Computing $S(Y_A)$ carefully, we may obtain the exact value of $S(Y_A)$. However, we only need the next lemma to prove our main theorem.

\begin{lemma}\label{parity_of_h}
Let $D$ be any domino tiling of $Y_A$. Then we have
$$h(D)\equiv \tfrac{n-1}{4}-\tfrac{\#A}{2}+\#(A\cap (2\mathbb{Z}+1))\pmod{2}.$$
%If there is domino tiling of D, then m=#A.
\end{lemma}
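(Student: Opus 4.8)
The plan is to reduce the parity of $h(D)$ to a coloring invariant of the board $Y_A$ that does not depend on $D$ at all, and then to evaluate that invariant by a direct count.

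\textbf{Step 1 (parity is a column-coloring invariant).} First I would prove the following completely general fact, which needs neither flip-connectivity nor $S(Y_A)\neq 0$: for \emph{any} board $X$ and \emph{any} $D\in \D(X)$ one has $h(D)\equiv N(X)\pmod 2$, where $N(X)=\#\{(i,j)\in X\mid i\ \text{odd}\}$ is the number of squares lying in odd columns. Indeed, a vertical domino occupies two squares in a single column and so contributes an even number to $N(X)$, whereas a horizontal domino occupies two squares in adjacent columns of opposite parity and so contributes exactly $1$. Summing over the dominoes of $D$ gives $N(X)\equiv h(D)\pmod 2$. This immediately explains why the right-hand side of the lemma is independent of $D$, and reduces everything to computing $N(Y_A)\bmod 2$.

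\textbf{Step 2 (splitting off the antidiagonal and pinning down $\#A$).} Since $Y_A$ is obtained from $Y$ by adjoining the squares $(\tfrac{m+n}{2}-a,a)$ for $a\in A$, I would write $N(Y_A)=N(Y)+E_A$, where $E_A=\#\{a\in A\mid \tfrac{m+n}{2}-a\ \text{odd}\}$ equals $\#(A\cap(2\mathbb Z+1))$ or $\#A-\#(A\cap(2\mathbb Z+1))$ according to the parity of $c:=\tfrac{m+n}{2}$. The key structural input is that the existence of a tiling forces $\#A=\tfrac{n-1}{2}$: all the adjoined squares lie on the antidiagonal $i+j=c$ and hence share a single checkerboard color, while pairing the squares of each row $j$ of $Y$ (whose length is $c-1-j$) leaves one unpaired square of color $c-1$ precisely when $c-1-j$ is odd, so $Y$ carries a color imbalance of exactly $\tfrac{n-1}{2}$ squares, all of the color opposite to the adjoined ones. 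Balancing the two colors, which is necessary for any domino tiling to exist, therefore requires $\#A=\tfrac{n-1}{2}$; in particular $\tfrac{n-1}{4}-\tfrac{\#A}{2}=0$, which is exactly what makes the half-integers in the statement combine into an integer.

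\textbf{Step 3 (the parity count and the main obstacle).} It remains to compute $N(Y)\bmod 2$. Writing $N(Y)=\sum_{j=1}^{n-1}\lceil (c-1-j)/2\rceil$ and grouping the $n-1$ (an even number of) consecutive values of $\ell=c-1-j$ into adjacent pairs, each pair $\{\ell,\ell+1\}$ contributes $\ell+1$; summing yields $N(Y)\equiv \tfrac{n-1}{2}\cdot c\pmod 2$, that is $N(Y)\equiv 0$ when $c$ is even and $N(Y)\equiv \tfrac{n-1}{2}$ when $c$ is odd. Combining this with $E_A$ in each parity case and using $n-1\equiv 0\pmod 2$ collapses everything to $h(D)\equiv \#(A\cap(2\mathbb Z+1))\pmod 2$, which is the asserted formula once $\tfrac{n-1}{4}-\tfrac{\#A}{2}=0$ is reinserted. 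I expect the main obstacle to be Step 2 rather than the arithmetic: recognizing that tileability rigidly determines $\#A$ — through the monochromatic antidiagonal together with the imbalance count — is the conceptual crux, since it is precisely this constraint that rescues the otherwise fractional right-hand side. The parity bookkeeping in Step 3, while demanding some care with the residues of $m$ and $n$ modulo $4$, is then routine.
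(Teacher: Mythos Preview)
Your argument is correct, and it is the same idea as the paper's — a two–coloring that is constant on dominoes of one orientation and alternating on the other — but you color by \emph{column} parity whereas the paper colors by \emph{row} parity. With row coloring the half–imbalance $X=\tfrac12(\#\text{black}-\#\text{white})$ of $Y_A$ is \emph{equal} to $\tfrac{n-1}{4}-\tfrac{\#A}{2}+\#(A\cap(2\mathbb Z+1))$ on the nose (each row of $Y$ contributes $1$ to the alternating sum when paired with the next, and each added antidiagonal square contributes $\pm\tfrac12$ according to the parity of its row $a$), so the lemma drops out in one line without ever invoking $\#A=\tfrac{n-1}{2}$. Your column coloring produces the different invariant $N(Y_A)$, and to reconcile it with the stated right–hand side you need the extra checkerboard argument in Step~2 forcing $\#A=\tfrac{n-1}{2}$; this is a genuine additional step, but it has the pleasant side effect of explaining \emph{why} the a priori half–integral expression on the right is actually an integer. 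In short: the paper's choice of coloring is tailored so that the invariant literally \emph{is} the claimed formula, while yours reaches the same conclusion via a small detour through tileability.
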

\begin{proof}
    We prove this by painting $Y_A$.
    We paint black for an odd row, and white for an even row.
    Consider placing dominoes one by one.
    We keep track of the following value:
    $$
        X=\frac{1}{2}(\#\{\text{black squares not covered by dominoes}\}-\#\{\text{white squares not covered by dominoes}\}).
    $$
    When no domino is placed, we have $X = \tfrac{n-1}{4}-\tfrac{\#A}{2}+\#(A\cap (2\mathbb{Z}+1))$. Placing a vertical domino does not change $X$, and placing a horizontal domino changes $X$ by $1$. When all dominoes are placed, we must have $X=0$. This proves the claim.
\end{proof}

Next lemma connects combinatorial objects with number-theoretic objects.

\begin{lemma}\label{condition_of_A}
    Let $m,n$ be coprime odd integers satisfying $n<m<3n$.
    We have $S(Y_A)S(Y_{n-A^c})\neq 0$ if and only if
    $A=\{\overline{\frac{m}{2}},\overline{\frac{2m}{2}},\overline{\frac{3m}{2}},\ldots,\overline{\frac{n-1}{2}\cdot\frac{m}{2}}\}$. 
    Here, $\overline{i}$ denotes the residue of $i$ modulo $n$. 
\end{lemma}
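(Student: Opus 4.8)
The plan is to translate the non-vanishing conditions supplied by \Cref{sum_for_half_board} into a linear system over $\mathbb{F}_2$ indexed by a single $n$-cycle, and then to show this system has the asserted unique solution. First I would apply \Cref{sum_for_half_board} directly to $S(Y_A)$, obtaining its three conditions on $A$ relative to the residue class of $\frac m2$ modulo $n$. For the factor $S(Y_{n-A^c})$ I would apply the same lemma to the set $B=n-A^c$; using the equivalence $i\in B\iff n-i\notin A$, the three conditions on $B$ transform into three conditions on $A$ relative to $-\frac m2$ modulo $n$. Since $m,n$ are coprime and odd, $c\equiv\frac m2\pmod n$ is a unit, so every nonzero residue is uniquely $\overline{kc}$ with $1\le k\le n-1$; setting $a_k=1$ when $\overline{kc}\in A$ and $a_k=0$ otherwise (with $a_0=0$), the six conditions become the boundary values $a_1=1$, $a_{(n-1)/2}=1$, $a_{(n+1)/2}=0$, $a_{n-1}=0$, together with the pairing relations $a_k+a_{1-k}=1$ and $a_k+a_{-1-k}=1$, each valid away from a few exceptional indices.

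Next I would analyze the graph on $\mathbb{Z}/n\mathbb{Z}$ whose edges are the pairs $\{k,1-k\}$ and $\{k,-1-k\}$. These arise from the two involutions $\phi(k)=1-k$ and $\psi(k)=-1-k$, whose composition $\phi\psi$ is translation by $2$; as $n$ is odd this is an $n$-cycle, so the graph is a single Hamiltonian path joining the fixed points $\frac{n+1}{2}$ (of $\phi$) and $\frac{n-1}{2}$ (of $\psi$). The key observation is that the index $0$ is incident to exactly the two edges $\{0,1\}$ and $\{0,n-1\}$, and these are precisely the two edges excluded from the pairing relations; thus $0$ is isolated in the constraint graph, and deleting it splits the path into two sub-paths along which $a$ must alternate. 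Each sub-path has one endpoint at a fixed point and the other endpoint adjacent to $0$, so all four relevant endpoints carry prescribed values; alternation then forces every $a_k$, and one checks the prescribed values are mutually consistent. This yields $a_k=1\iff 1\le k\le\frac{n-1}{2}$, that is $A=\{\overline{kc}\mid 1\le k\le\frac{n-1}{2}\}$, which is the asserted set.

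Finally, for the forward direction I would verify directly that this set satisfies all six conditions: the four boundary values are immediate, and for each pairing $\{k,1-k\}$ or $\{k,-1-k\}$ exactly one index lies in $\{1,\dots,\frac{n-1}{2}\}$, which is a short check on residues. I expect the main obstacle to be the careful bookkeeping of the exceptional indices $0,1,\frac{n\pm1}{2},n-1$ where the pairing relations of \Cref{sum_for_half_board} do not apply: one must confirm that exactly these produce the two non-constraint edges at $0$ and the two path endpoints, so that the boundary data pins down a unique, consistent two-coloring rather than over- or under-determining it. Degenerate small cases (for instance $n=3$, or configurations where $0$ is adjacent to a fixed point) should be checked separately to ensure the path decomposition behaves as claimed.
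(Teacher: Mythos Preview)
Your proposal is correct and rests on the same underlying mechanism as the paper's proof: both arguments feed the conditions of \Cref{sum_for_half_board} for $A$ and for $n-A^c$ into a system of constraints on the characteristic function of $A$, and both solve that system by walking through the residues $\overline{kc}$ one step at a time. In your coordinates the paper's argument is precisely the observation that the two boundary values $a_1=1$ and $a_{n-1}=0$, together with alternating applications of the $\phi$- and $\psi$-relations, propagate along the two sub-paths emanating from the neighbors of $0$; the paper simply phrases this as ``$\overline{kc}\in A$, hence $\overline{-kc}\notin A$, hence $\overline{(k+1)c}\in A$'' without naming the graph.

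The difference is purely one of packaging. The paper's proof is five lines: it starts from condition~1 and the duality $i\in n-A^c\Leftrightarrow n-i\notin A$, then iterates condition~2 directly, never separating the six constraints or invoking any graph structure. Your version first reparametrizes, extracts all six conditions explicitly, builds the involution graph, identifies the Hamiltonian path and the isolated vertex $0$, and only then reads off the solution. This buys you a clearer picture of \emph{why} the system has a unique solution (the path decomposition makes over/under-determination visible), at the cost of considerably more setup. For the paper's purposes the direct walk is shorter and suffices, but your framework would adapt more gracefully if one wanted to analyze variants of the problem where the constraint graph has a different shape.
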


\begin{proof}
    Suppose that $A$ and $n-A^c$ satisfy all conditions in \Cref{sum_for_half_board}. 
    Using the first condition, we get $\overline{\frac{m}{2}} \in A,\;n-A^c$.
    Therefore $\overline{-\frac{m}{2}} \not\in A,\;n-A^c$.
    Using the second condition for $(i,j)=(\overline{-\frac{m}{2}},\overline{\frac{2m}{2}})$, we get $\overline{\frac{2m}{2}}\in A,\;n-A^c$.
    Therefore $\overline{-\frac{2m}{2}} \not\in A,\;n-A^c$.
    Repeating this argument, we get $A=\{\overline{\frac{m}{2}},\overline{\frac{2m}{2}},\overline{\frac{3m}{2}},\ldots,\overline{\frac{n-1}{2}\cdot\frac{m}{2}}\}$. It is easy to check that this satisfies all conditions in \Cref{sum_for_half_board}. 
\end{proof}

%The next definition and lemma are essentially the same as section 3. We write this only to improve the self-containedness of section 4, so you can skip proof if you already see section 3.

\begin{definition}
    Suppose that $t$ is a positive integer coprime to $n$.
    Then multiplication by $t$ defines a permutation on $\mathbb{Z}/n\mathbb{Z}$.
    We set $H_n = \{2,4,\dots,n-1\}\subset \mathbb{Z}/n\mathbb{Z}$ and define a function $\varepsilon_t\colon H_n\to \{\pm 1\}$ so that $tH_n = \{\varepsilon_t(i) i\mid i\in H_n\}$.
    We write $G_{n,t}=\{i\in H_n \mid \varepsilon_t(i)=-1\}$.
\end{definition}

\begin{lemma}\label{Gauss_lemma_2}
     Suppose that $t$ is a positive integer coprime to $n$.
     Then we have
     $$
        (-1)^{\#G_{n,t}} = \jac{t}{n}.
     $$
\end{lemma}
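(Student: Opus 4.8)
The plan is to deduce the statement from the already-established \Cref{Gauss_lemma} by recognizing $(-1)^{\# G_{n,t}}$ as a quantity that is \emph{independent of the chosen system of representatives}. Observe first that both the set $H_n=\{2,4,\dots,n-1\}$ used here and the set $\{1,2,\dots,\frac{n-1}{2}\}$ used in \Cref{Gauss_lemma} are complete systems of representatives for the nonzero classes of $\mathbb{Z}/n\mathbb{Z}$ modulo the action of $\{\pm 1\}$: since $n$ is odd, for each pair $\{i,-i\}$ exactly one of $i$ and $n-i$ is even, so the even classes $2,4,\dots,n-1$ meet every such pair exactly once.

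To set up the comparison, let $O$ be the set of orbits of $(\mathbb{Z}/n\mathbb{Z})\setminus\{0\}$ under multiplication by $-1$. Multiplication by $t$ is coprime to $n$, hence permutes these orbits, giving a bijection $\tau\colon O\to O$. For any system of representatives $S=\{s_o\mid o\in O\}$, define $\eta_S(o)\in\{\pm 1\}$ by $t s_o\equiv \eta_S(o)\,s_{\tau(o)}\pmod n$. Unwinding the definitions of $\varepsilon_t$ and $G_{n,t}$, one checks that $\# G_{n,t}$ counts exactly the orbits $o$ with $\eta_S(o)=-1$ when $S=H_n$, so that $(-1)^{\# G_{n,t}}=\prod_{o\in O}\eta_S(o)$; the same formula holds for the representative system of \Cref{Gauss_lemma}.

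The key step is that $\prod_{o\in O}\eta_S(o)$ does not depend on $S$. Any other system has the form $S'=\{\delta(o)s_o\mid o\in O\}$ for a sign function $\delta\colon O\to\{\pm 1\}$; writing $s'_o=\delta(o)s_o$ one computes $t s'_o\equiv \delta(o)\delta(\tau(o))\eta_S(o)\,s'_{\tau(o)}\pmod n$, so $\eta_{S'}(o)=\delta(o)\delta(\tau(o))\eta_S(o)$. Taking the product over all orbits and using that $\tau$ is a bijection, the extra factor is $\prod_{o}\delta(o)\delta(\tau(o))=(\prod_{o}\delta(o))^2=1$, whence $\prod_o\eta_{S'}(o)=\prod_o\eta_S(o)$. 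Applying \Cref{Gauss_lemma} with $m=t$, which evaluates this product for the system $\{1,\dots,\frac{n-1}{2}\}$ to $\jac{t}{n}$, we conclude $(-1)^{\# G_{n,t}}=\jac{t}{n}$.

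The main obstacle is bookkeeping rather than conceptual: one must carefully match the paper's definition of $\varepsilon_t$, phrased through the image $tH_n=\{\varepsilon_t(i)i\}$, with the ``where does $ts_o$ land'' description of $\eta_S$ above, checking that the two produce the same count $\# G_{n,t}$ via the bijection sending $i$ to the unique $s$ with $ts\equiv -i$. As a self-contained alternative that keeps the combinatorial proof independent of \Cref{Gauss_lemma}, I would instead rerun its induction verbatim: writing $n=pn'$ with $p$ an odd prime, the even representatives split into the multiples of $p$, which correspond to the even residues mod $n'$ (again because $p$ is odd) and contribute $\jac{t}{n'}$ by induction, and the remainder $H''_n$, for which the congruence $\prod_{i\in H''_n}ti\equiv \jac{t}{p}\prod_{i\in H''_n}i\pmod p$ supplies the factor $\jac{t}{p}$.
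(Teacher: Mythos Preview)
Your proposal is correct. The paper's own ``proof'' is simply the sentence ``This is essentially the same as \Cref{Gauss_lemma}, so we omit the proof,'' i.e.\ the intended argument is to rerun the induction of \Cref{Gauss_lemma} with the new half-system $H_n=\{2,4,\dots,n-1\}$ in place of $\{1,\dots,\tfrac{n-1}{2}\}$; this is exactly the alternative you sketch at the end.

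Your primary argument takes a slightly different and cleaner route: rather than redoing the induction, you observe once and for all that $\prod_{o}\eta_S(o)$ is independent of the chosen system of representatives $S$ for $(\mathbb{Z}/n\mathbb{Z})^\times/\{\pm 1\}$, and then invoke \Cref{Gauss_lemma} for the particular system $\{1,\dots,\tfrac{n-1}{2}\}$. The cocycle computation $\eta_{S'}(o)=\delta(o)\delta(\tau(o))\eta_S(o)$ is correct and the product telescopes as you say. The only cosmetic point is the index mismatch you flag yourself: in the paper $\varepsilon_t$ is indexed by the \emph{target} $i$ (via $tH_n=\{\varepsilon_t(i)i\}$), whereas your $\eta_S$ is indexed by the \emph{source} orbit; since the map $j\mapsto i$ determined by $tj=\pm i$ is a bijection of $H_n$, the two count the same number of $-1$'s, so $\#G_{n,t}=\#\{o:\eta_S(o)=-1\}$ as you claim. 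Either approach is fine; yours has the advantage of isolating the representative-independence as a reusable lemma.
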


\begin{proof}
    This is essentially the same as \Cref{Gauss_lemma}, so we omit the proof. 
\end{proof}

%\begin{proof}
%    We prove this by induction on $n$.
%    For $n=1$, the claim is trivial.
%    Let $n\geq 2$ and write $n=pn'$ where $p$ is a prime number.
%    Since $n$ is odd, $p$ is also odd.
%    We split $H_n$ into two parts:
%    $$
%        H'_n = H_n \cap p\mathbb{Z}/n\mathbb{Z}, \quad H''_n = H_n \setminus H'_n.
%    $$
%    Accordingly, we set $G'_n=G_n\cap H'_n$ and $G''_n=G_n\cap H''_n$.
%    The canonical isomorphism $p\mathbb{Z}/n\mathbb{Z}\cong \mathbb{Z}/n'\mathbb{Z}$ identifies $H'_n$ (resp. $G'_n$) with $H_{n'}$ (resp. $G_{n'}$).
%    Therefore the induction hypothesis shows that $(-1)^{\#G'_n} = \jac{m}{n'}$.
%    Let $x = \prod_{i\in H''_n}i$.
%    Then we have
%    $$
%        \prod_{i\in H''_n} mi = m^{\frac{n-n'}{2}}x\equiv (m^{\frac{p-1}{2}})^{n'}x \equiv \jac{m}{p}x \pmod p.
%    $$
%    On the other hand, we have $mH''_n = \{\varepsilon(i)i\mid i\in H''_n\}$, so we get
%    $$
%        \prod_{i\in H''_n} mi = \prod_{i\in H''_n}\varepsilon(i)i = (-1)^{\#G''_n}x.
%    $$
%    This shows that $(-1)^{\#G''_n} = \jac{m}{p}$.
%    Combining these results, we get
%    $$
%        (-1)^{\#G_n} = (-1)^{\#G'_n}(-1)^{\#G''_n} = \jac{m}{p}\jac{m}{n'} = \jac{m}{n}.
%    $$
%\end{proof}

\begin{proof}[Proof of \Cref{main}]
    Let $m,n$ be positive integers such that $n$ is odd. 
    We need to prove
    $$
        S(R_{m-1,n-1})=        
        \begin{cases}
            \jac{m}{n}&(m\equiv 1\pmod 2)\\
            \jac{m/2}{n}&(m\equiv 0\pmod 2).
        \end{cases}
    $$
    By \Cref{periodic_of_S}, $S(R_{m+n-1,n-1})=S(R_{m-1,n-1})\cdot \sqrt{-1}^{\frac{n^2-1}{4}}=\jac{2}{n}S(R_{m-1,n-1})$.
    Therefore it is enough to prove when $m,n$ are odd and $n\leq m \leq 3n$. By \Cref{not_coprime_implies_zero}, we can assume $(m,n)=1$.
    By \Cref{decomposition_to_Y} and \Cref{condition_of_A}, we have $S(R_{m-1,n-1})=S(Y_A)^2$, where $A=\{\overline{\frac{m}{2}},\overline{\frac{2m}{2}},\overline{\frac{3m}{2}},\ldots,\overline{\frac{n-1}{2}\cdot\frac{m}{2}}\}$.
    By \Cref{parity_of_h}, this is equal to $(-1)^{\#G_{n,t}}$, where $t=\overline{\frac{m}{4}}$.
    By \Cref{Gauss_lemma_2}, this is equal to $\jac{t}{n}=\jac{m}{n}$.
\end{proof}

\printbibliography

\end{document}